\newtheorem{theorem}{Theorem}
\newtheorem{lemma}{Lemma}
\newtheorem{proposition}{Proposition}
\newtheorem{corollary}{Corollary}
\newtheorem{claim}{Claim}
\newcommand{\f}[2]{\frac{#1}{#2}}
\newcommand{\tw}{\tilde{w}}
\newcommand{\tN}{\tilde{{\mathcal N}}}
\newcommand{\al}{\alpha}
\newcommand{\be}{\beta}
\newcommand{\ga}{\gamma}
\newcommand{\de}{\delta}
\newcommand{\ve}{\varepsilon}
\newcommand{\La}{\Lambda}
\newcommand{\si}{\sigma}
\newcommand{\vp}{\varphi}
\newcommand{\rone}{\mathbf R^1}
\newcommand{\cn}{{\mathcal N}}
\newcommand{\ct}{{\mathbf T}}
\newcommand{\cz}{\mathcal Z}
\newcommand{\cc}{\mathcal C}
\newcommand{\cl}{\mathcal L}
\newcommand{\cp}{\mathcal P}
\newcommand{\p}{\partial}
\newcommand{\beq}{\begin{equation}}
\newcommand{\eeq}{\end{equation}}
\newcommand{\beqna}{\begin{eqnarray*}}
\newcommand{\eeqna}{\end{eqnarray*}}
\newcommand{\beqn}{\begin{equation*}}
\newcommand{\eeqn}{\end{equation*}}
\newcommand{\bp}{\begin{proof}}
\newcommand{\ep}{\end{proof}}
\newcommand{\bprop}{\begin{proposition}}
\newcommand{\eprop}{\end{proposition}}
\newcommand{\bt}{\begin{theorem}}
\newcommand{\et}{\end{theorem}}
\newcommand{\bex}{\begin{Example}}
\newcommand{\eex}{\end{Example}}
\newcommand{\bc}{\begin{corollary}}
\newcommand{\ec}{\end{corollary}}
\newcommand{\bcl}{\begin{claim}}
\newcommand{\ecl}{\end{claim}}
\newcommand{\bl}{\begin{lemma}}
\newcommand{\el}{\end{lemma}}
\begin{document}

\title
[Local well-posedness  for the periodic Boussinesq equation ]
{Improved local well-posedness   for the periodic ``good'' Boussinesq equation }

\author{Seungly Oh, Atanas Stefanov}

\address{Seungly Oh, 
405, Snow Hall, 1460 Jayhawk Blvd. , 
Department of Mathematics,
University of Kansas,
Lawrence, KS~66045, USA}
\address{Atanas Stefanov, 405, Snow Hall, 1460 Jayhawk Blvd. , 
Department of Mathematics,
University of Kansas,
Lawrence, KS~66045, USA}
\date{\today}

\thanks{Oh and Stefanov are partially supported by  NSF-DMS 0908802 }

\subjclass[2000]{35Q53}

\keywords{Boussinesq equation, local well-posedness}

\begin{abstract}
 We prove that the ``good'' Boussinesq model is locally well-posed in the space 
 $H^{-\alpha}\times H^{-\al-2}$, $\al<\f{3}{8}$. In the proof, we employ the method of normal forms, which allows us to explicitly extract the rougher part of the solution, while we show that the remainder is in the smoother space $C([0,T], H^\be(\ct)),\  \be<\min(1-3\al, \f{1}{2}-\al)$.  Thus, we establish a smoothing effect of order $\min(1-2\al, \f{1}{2})$ for the nonlinear evolution. This is new even in the previously considered cases $\al \in (0, \f{1}{4})$. 
\end{abstract}

\maketitle
\date{today}

\section{Introduction}
 We consider the Cauchy problem for the periodic ``good'' Boussinesq problem 
 \begin{equation}
\label{1}
\left|
\begin{array}{l}
u_{tt}+u_{xxxx}-u_{xx}+(u^p)_{xx}=0, \ \ (t,x)\in \rone_+\times  \ct\\
u(0,x)=u_0(x); u_t(0,x)=u_1(x)
\end{array}\right.
\end{equation}  
This is a model  that was derived by Boussinesq, \cite{Bous}, in the case $p=2$ and  belongs to a family of Boussinesq  models, which all have the same level of formal validity. We will consider mostly the original model (i.e. with  $p=2$),  but we state some previous results in this generality for completeness. 

It was observed that \eqref{1} exhibits some desirable features, like local  well-posedness in various function spaces. Let us take the opportunity to explain the known results. Most of these results concern the same equation on the real line. 
It seems that the earliest work on the subject goes back to Bona and Sachs, who  have considered \eqref{1} and showed well posedness in 
$H^{\f{5}{2}+}(\rone)\times  H^{\f{3}{2}+}(\rone)$, \cite{Bona}. Interestingly, global well-posedness for \eqref{1} does not hold\footnote{except for small data, see below}, even if one requires smooth initial data with compact support. In fact, there are  ``instability by blow-up'' results for such unstable traveling waves for this equation. 

 Tsutsumi and Mathashi, \cite{TM}, established local well-posedness of \eqref{1}  in $H^1(\rone)\times H^{-1}(\rone)$. Linares lowered these  smoothness requirement 
  to $L^2(\rone)\times H^{-2}(\rone)$, $1<p<5$. In the same paper, Linares has showed the global existence of small solutions.  Farah, \cite{Farah} has shown well-posedness in $H^s(\rone)\times \tilde{H}^{s-2}(\rone)$, when $s>-1/4$ and the space $\tilde{H}^\al$ is defined via  $\tilde{H}^\al=\{u: u_x \in H^{\al-1}(\rone)\}$. Farah has also established ill-posedness (in the sense of lack of continuous dependence on initial data)  for all $s<-2$.  Kishimoto and Tsugava, \cite{KT}  have further improved this result to $s>-1/2$, which seems to be the most general result currently available for this problem. 
  
  Regarding the case of periodic boundary conditions, Fang and Grillakis, \cite{FG} who have established local well-posedness in $H^s(\ct)\times H^{s-2}(\ct)$, $s>0$ (when 
  $1<p<3$ in \eqref{1}).  This result was later improved to $s>-1/4$ for the quadratic equation by Farah and Scialom, \cite{FS}, by utilizing the optimal quadratic estimates   (proved in the paper) in the  Schr\"odinger $X^{s,b}$ spaces. In addition,  he showed  that these estimates fail below $s<-1/4$. Thus, local well-posedness for \eqref{1} in 
  $H^{-1/4+}$ is the best possible result, {\it obtainable by this method}. 
  
  Next, we point out that the initial value problem for the Boussinesq problem \eqref{1} is very closely related to the corresponding problem for quadratic Schr\"odinger equation 
  \begin{equation}
  \label{2}
  iu_t+u_{xx}+F(u, \bar{u})=0,
  \end{equation}
where $F$ is a bilinear form, which contains expressions in the form $u^2, u\bar{u}, \bar{u}^2$. Recall that Kenig, Ponce and Vega, \cite{CPV2} have established the local well-posedness 
 in $H^{-1/4+}(\rone)$ for \eqref{2}, while later Kishimoto-Tsugava, \cite{KT} (see also \cite{Nishimoto1}, \cite{Nishimoto2}) have established the sharpness of this result on the line (when the nonlinearity is $u\bar{u}$). 
 
 Our main concern in this paper is to extend the results of Farah and Scialom, \cite{FS} to even rougher initial data, namely in the class $H^s(\ct)\times H^{s-2}(\ct)$, $s>-3/8$. As we have mentioned above, the method of Farah and Scialom is optimal as far as the estimates are concerned. Our approach is similar to our earlier paper, \cite{OS}, where we apply the method of normal forms. The idea is that the roughest part of the solution to the nonlinear equation is the free solution, while the rest is actually much smoother. This allows us to obtain a smoothing estimate for the solution, in the sense described in our main result, Theorem \ref{theo:1} below. 
 
 Before we state our results, we introduce our setup. The Fourier transform and its inverse  in the time variable are defined via 
 $$
 \hat{h}(\tau)=\f{1}{2\pi} \int_{\rone} h(t) e^{-i t \tau} dt,  \ \ h(t)= \int_{\rone} \hat{h}(\tau) e^{i t \tau} d\tau 
 $$
Next,  we identify the torus $\ct$ with the interval $[0,2\pi]$.   In particular, for a smooth  $2\pi$-periodic 
 function $f:[0,2\pi]\to \cc$,  define the Fourier coefficients via 
 $$
\hat{f}(n)= \f{1}{2\pi} \int_0^{2\pi} f(x)e^{-i n x} dx.  
 $$
 The Sobolev spaces are defined for all $s\in \rone$ by the norms 
 $$
 \|f\|_{H^s(\ct)}=\| <n>^s \hat{f}(n)\|_{l^2}. 
 $$
 The Schr\"odinger  $X^\ve_{s,b}, \ve=\pm 1$ spaces, which will be relevant for our considerations, are defined via the completion of all sequences of Schwartz functions\footnote{Here, we take only those functions $F=(F_n)$, so that there exists $N$, so that $F_n(t) \equiv 0$, for all $|n|>N$.} $F=\{F_n\}_{n\in\cz \setminus \{ 0\} }$, $F_n:\rone\to \cc$, in the norm 
 \begin{equation}
 \label{30}
 \|F\|_{X^\ve_{s,b}}=\left(\sum_{n\in \mathbb{Z}\setminus \{0\}} \int_{\rone} (1+|\tau-\ve n^2|)^{2b} <n>^{2s}
 |\widehat{F_n}(\tau)|^2 d\tau\right)^{1/2}. 
 \end{equation}
In our definition \eqref{30}, we have restricted to the space of functions with spatial-mean zero (i.e. $\int_{\mathbf{T}} f(t,x)\,dx=0$ for all $t\in \mathbb{R}$).  We will justify this reduction in Section~\ref{sec:1.1}.

Observe that we have the duality relation $(X^{+}_{s,b})^* = X^{-}_{-s,-b}$.  
 \begin{theorem}
 \label{theo:1}
 Let $0<\al<3/8$ and $p=2$.  The Cauchy problem \eqref{1} is locally well-posed in $H^{-\al}(\ct)\times H^{-\al-2}(\ct)$.  That is, given $f\in H^{-\al}$ and $g\in H^{-\al-2}$, there exists $T := T(\|f\|_{H^{-\al}} +  \|g\|_{-\al-2})>0$ and a unique solution $u \in C^0_t ([0,T]; H^{-\al})$ of \eqref{1}.  
 
 Furthermore, we have the following representation formula for the solution $u$ 
 \begin{equation}
 \label{300}
u(t)= \f{1}{2\pi} \int_0^{2\pi} u_0  + e^{-(A_0 t+A_1 \f{t^2}{2})\cp} [\cos (t\sqrt{\p_x^4 - \p_x^2}) u_0 + \f{\sin(t \sqrt{\p_x^4 - \p_x^2})}{\sqrt{\p_x^4 - \p_x^2}} u_1] +z,
\end{equation} 
where $A_0=\f{1}{2\pi}\int_0^{2\pi} u_0(x)dx , A_1=\f{1}{\pi}\int_0^{2\pi} u_1(x) dx$, $\cp=\p_{xx}(\p_{x}^4-\p^2_x)^{-1/2}$ and \\ 
 $z\in  C^0_t([0,T]; H_x^{\be}(\mathbb{T}))$ for any $\be: \be <\min (1-3\al, \f{1}{2}-\al)$. 
 
 In particular, for data $(u_0, u_1)$, so that $\int_0^{2\pi} u_0(x)dx=0=\int_0^{2\pi} u_1(x) dx$, we have 
 $$
 u-\left[\cos (t\sqrt{\p_x^4 - \p_x^2}) u_0 + \f{\sin(t \sqrt{\p_x^4 - \p_x^2})}{\sqrt{\p_x^4 - \p_x^2}} u_1\right] \in C^0_t([0,T]; H_x^{\be}(\mathbb{T})). 
 $$
 \end{theorem}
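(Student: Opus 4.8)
The plan is to follow the standard normal-form/contraction strategy for quasilinear dispersive equations on the torus, adapted to the Boussinesq system. First I would reduce \eqref{1} to a first-order (in time) Schrödinger-type system. Writing $u$ in terms of its spatial mean plus a mean-zero part, the mean evolves by an explicit ODE (this accounts for the $A_0,A_1$ terms and the $e^{-(A_0 t + A_1 t^2/2)\cp}$ conjugation in \eqref{300}), so it suffices to treat the mean-zero part. Setting $v = u + i\Lambda^{-1}u_t$ with $\Lambda = \sqrt{\p_x^4-\p_x^2}$, the system becomes $i v_t - \Lambda v = \Lambda^{-1}\p_x^2 (u^2) = \Lambda^{-1}\p_x^2\big(\tfrac{v+\bar v}{2}\big)^2$. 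Since for mean-zero high-frequency data $\Lambda \sim n^2$, the operator $\Lambda$ behaves like $\p_x^2$, so this is morally a quadratic Schrödinger equation with a nonlinearity of the schematic form $u\bar u, u^2, \bar u^2$, and I would work in the spaces $X^{\pm}_{s,b}$ from \eqref{30} with $s=-\al$ and $b$ slightly above $1/2$.

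Next comes the normal form transformation. The Duhamel term is $\int_0^t e^{-i(t-t')\Lambda}\Lambda^{-1}\p_x^2(u^2)\,dt'$; expanding each $u$ as free evolution plus error and integrating by parts in $t'$ (i.e., using the resonance function $\om = \pm n^2 \mp n_1^2 \mp n_2^2$ with $n=n_1+n_2$) transfers a derivative gain whenever $|\om|$ is large. On $\ct$ the resonant set $\om = 0$ is essentially $\{n_1 n_2 = 0\}$ for the $u\bar u$-type interaction and $\{n_1 = -n_2\}$-type for others; since we have excluded zero modes, the genuinely resonant contributions are benign (one of the frequencies is comparable to the output). For the non-resonant part, $|\om| \gtrsim \max(|n_1|,|n_2|)^2 \gtrsim \langle n\rangle^2$ in the relevant regime, which after the integration by parts yields a bilinear term with two extra derivatives available. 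One then needs the key bilinear estimates — exactly of the type Farah--Scialom proved to be sharp at $s=-1/4$ in $X^{s,b}$ — together with the extra smoothing from the normal form, to close the iteration for the remainder $z$ in $X^{\be}_{s',b}$ with $\be$ up to $\min(1-3\al,\tfrac12-\al)$. The number $3\al$ appears because the boundary term of the normal form is itself quadratic in the free solution (which lives at regularity $-\al$), and iterating the substitution once more produces a cubic term, costing a further $\al$; the competing bound $\tfrac12-\al$ is the ceiling coming from the $X^{s,b}$ bilinear estimate with $b$ near $1/2$.

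The main obstacle, and where the real work lies, is proving the trilinear (and the residual bilinear) estimates with the sharp smoothing exponent: after the normal form one must bound expressions like $\big\| \Lambda^{-1}\p_x^2 |\om|^{-1} (uv) \big\|_{X^{\be}_{-\al,b}}$ and the cubic term obtained by re-substituting Duhamel, uniformly down to $\al < 3/8$. This requires a careful case analysis by the relative sizes of $|n_1|,|n_2|,|n|$ and of the modulations $|\tau_j \mp n_j^2|$, using $L^4$-type periodic Strichartz estimates (Zygmund/Bourgain) and the divisor bound to handle the number-theoretic counting on $\ct$; the borderline case is the high-high-to-low interaction where $|n_1|\sim|n_2|\gg|n|$, for which the $\langle n\rangle^2$ gain in $|\om|$ must be balanced against the derivative loss $\Lambda^{-1}\p_x^2 \sim 1$ and the output regularity. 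Finally, uniqueness and continuous dependence follow from a standard difference estimate (Lipschitz in the same norms) once these multilinear bounds are in hand, and continuity in time $u\in C^0_t H^{-\al}$ together with $z\in C^0_t H^{\be}$ comes from the embedding $X^{s,b}\hookrightarrow C^0_t H^s$ for $b>1/2$.
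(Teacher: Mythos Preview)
Your outline follows essentially the same route as the paper: reduce to mean zero, pass to a first-order Schr\"odinger-type system, apply a normal form to the free--free interactions, and close a contraction for the remainder in $X^\pm_{\gamma,1/2+}$ via bilinear and trilinear estimates. Two points, however, need correction.

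First, your claimed lower bound $|\omega|\gtrsim \max(|n_1|,|n_2|)^2$ is not true in general: for the $(+,+)$ interaction one has $\omega=(n_1+n_2)^2-n_1^2-n_2^2=2n_1n_2$, which is small when one input frequency is small, and for the $(+,-)$ interaction $\omega$ depends on $|n|\cdot|n_2|$. The normal-form symbol therefore has genuinely different decay in the different sign configurations $(\ve_1,\ve_2)$, and each must be treated separately; your blanket ``two extra derivatives available'' is too optimistic.

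Second, and more substantively, you have not identified the actual reason a \emph{trilinear} estimate is forced. After the normal form, the correction $h=T^{\ve_1,\ve_2}(\mathcal L,\mathcal L)$ lands in $L^\infty_t H^1_x$, and one would like the bilinear bound $\|\mathcal N(h,v)\|_{X^+_{\gamma,-1/2+}}\lesssim \|h\|_{L^\infty_t H^1_x}\|v\|_{X^{\ve}_{0,1/2+}}$ to close; but this estimate \emph{fails} (a generic $L^\infty_t H^1_x$ function carries no usable modulation structure). One must instead exploit that $h$ is itself a specific bilinear expression in free solutions and prove directly that $\|\mathcal N(T^{\ve_1,\ve_2}(u,v),w)\|_{X^+_{\gamma,-1/2+}}\lesssim \|u\|_{X^{\ve_1}_{0,1/2+}}\|v\|_{X^{\ve_2}_{0,1/2+}}\|w\|_{X^{\ve_3}_{0,1/2+}}$. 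This trilinear estimate is where the sharp upper constraint $\gamma<1-2\alpha$ appears (saturated by the configuration $\xi_1=N{+}1$, $\xi_2=-N$, $\xi_3=N$, $\xi_4=-N{-}1$), and after undoing the $\langle\nabla\rangle^{-\alpha}$ conjugation this becomes $\beta<1-3\alpha$; it is not, as you suggest, a matter of ``re-substituting Duhamel'' to produce a further iterate. The restriction $\alpha<3/8$ then comes from intersecting $\gamma<1-2\alpha$ with the \emph{lower} constraint $\gamma>2\alpha-\tfrac12$ required by the bilinear estimate $\|\mathcal N(u,v)\|_{X^+_{\gamma,-1/2+}}\lesssim\|u\|_{X^{\ve_1}_{\gamma,1/2+}}\|v\|_{X^{\ve_2}_{0,1/2+}}$, which you also need. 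Finally, no divisor-type counting is used; the periodic $L^4$ and $L^6$ Strichartz embeddings $X^\pm_{0,3/8}\hookrightarrow L^4_{t,x}$ and $X^\pm_{0+,1/2+}\hookrightarrow L^6_{t,x}$ suffice for all the multilinear bounds.
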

  
 \section{Some preliminaries}
 \label{sec:1}
We first make a reduction of the Cauchy problem \eqref{1} to reduce to the case of mean value zero solutions, since this will be important for our argument. 
\subsection{Some reductions of the problem}\label{sec:1.1}
Observe that if 
$$
u(t,x)=\sum_{n=-\infty}^\infty  \hat{u}(t,n)e^{i n x},
$$
and if we consider the evolution of the zero mode, $\hat{u}(t,0)$, we find easily that 
$$
\f{d^2 \hat{u}(t,0)}{d t^2}=0. 
$$
Equivalently, integrating the equation in $x$ yields $\int_0^{2\pi} u_{tt}(t,x) dx=0$, whence  
$$
 \int_0^{2\pi} u(t,x) dx=\int_0^{2\pi} u(0,x) dx+ t\int_0^{2\pi} u_t(0,x) dx
$$
Thus, setting $w:u(t,x)=\f{1}{2\pi} \int_0^{2\pi} u(t,x) dx+ v(t,x)$, so that 
$$
 u(t,x)= \f{1}{2\pi}\left(\int_0^{2\pi} u(0,x) dx+ t\int_0^{2\pi} u_t(0,x) dx\right)+v(t,x)
$$ 
we conclude that $\int_0^{2\pi} v(t,x)dx =0$. Denoting 
$$
A(t)=\f{1}{2\pi}\left(\int_0^{2\pi} u(0,x) dx+ t\int_0^{2\pi} u_t(0,x) dx\right), 
$$
we see that  \eqref{1} 
is equivalent to the nonlinear problem 
\begin{equation}
\label{5}
\left|
\begin{array}{l}
v_{tt}+v_{xxxx}-v_{xx}+(A(t)+v)^2_{xx}=0 \\
v(0,x)=u_0(x)-\f{1}{2\pi} \int_0^{2\pi} u_0(x)dx; 
v_t(0,x)=u_1(x)-\f{1}{2\pi} \int_0^{2\pi} u_1(x)dx,
\end{array}
\right.
\end{equation}
We would like to consider the problem with data 
in the Sobolev spaces $H^{-\al}$, but to make our notations simpler, we prefer to work in $L^2(\ct)$, so we transform the equation \eqref{5} in $L^2(\ct)$ context. Namely, we introduce  $w= \langle \nabla \rangle^{-\alpha} v$, that is 
$$
w(t,x)=\sum_{n\neq 0} \f{\hat{v}(t,n)}{\langle n \rangle^{\alpha}} e^{i n x}. 
$$
Note that by construction  $\int_0^{2\pi} w(t,x)dx=0$. We can rewrite now 
 \eqref{5} as follows 
\begin{equation}\label{bous1}
\left| \begin{array}{l} w_{tt} - w_{xx} + w_{xxxx} + 2A(t)   w_{xx}+ \langle \nabla \rangle^{-\alpha} \p_x^2( \langle \nabla \rangle^{\alpha} w)^2 = 0, \qquad x\in \mathbf{T},\, t>0\\
w(0,x) = f(x) \in L^2(\mathbf{T});  \quad w_t (0,x) =  g(x) \in H^{-2}(\mathbf{T})\end{array} \right.
\end{equation}
where  
$$
f= \sum_{n\neq 0} \f{\widehat{u_0}(n)}{\langle n \rangle^{\alpha}} e^{i \pi n x}, 
g= \sum_{n\neq 0} \f{\widehat{u_1}(n)}{\langle n \rangle^{\alpha}} e^{i \pi n x}.
$$
 Note that $\int_\ct f(x)dx=0, \int_\ct g(x) dx=0$. 
 
 Set $L:= \sqrt{\p_x^4 - \p_x^2}$. Note that $\widehat{L h}(k)=|k|\sqrt{1+k^2} \hat{h}(k)$. Furthermore, in the space of functions with mean value zero, $L$ is invertible, with inverse given by 
 $$
L^{-1} h (x)=\sum_{k\neq 0} \f{1}{|k|\sqrt{1+k^2}} \hat{h}(k) e^{i k x}.
 $$
By the Duhamel's principle, \eqref{bous1} is equivalent to 
\begin{align}\label{duh}
w(t,x) &= \cos (t L) f(x) +  \sin (t L )[L^{-1} g]\\ \notag
& + \int_0^t \sin ((t-s) L) L^{-1}[2 A(s)w_{xx}+ \langle \nabla \rangle^{-\alpha} 
\p_x^2( \langle \nabla \rangle^{\alpha} w(s,\cdot))^2\, ]ds.
\end{align}  
Using Euler's formula, we can  write $w = w^+ + w^-$,  where
\begin{align*}
w^+ (t,x) &= \frac{e^{itL} f}{2} + \frac{e^{itL} L^{-1} g}{2i} + \frac{1}{2i} \int_0^t e^{i(t-s)L}[F(w^+) + \mathcal{N}(w^+ + w^-, w^+ + w^-)]\, ds\\
w^- (t,x) &= \frac{e^{-itL} f}{2} - \frac{e^{-itL} L^{-1} g}{2i}  - \frac{1}{2i} \int_0^t e^{-i(t-s)L}[F(w^-)+\mathcal{N} (w^+ + w^-, w^+ + w^-)]\, ds
\end{align*}
where $F(w)=2 A(s) L^{-1}\p_{xx} w$ and $ \mathcal{N}(u,v) := L^{-1} \langle \nabla \rangle^{-\alpha} \p_x^2 ( \langle \nabla \rangle^{\alpha} u \langle \nabla \rangle^{\alpha} v)$. Thus, we have replaced the single wave equation for $w$ into a system of equations, involving $w^+, w^-$. Namely, denoting $\mathcal{L}(f,g):= \frac{1}{2}e^{itL} f + \frac{1}{2i} e^{itL} L^{-1} g$ (or $\mathcal{L}$ for short), we have 
\begin{equation}
\label{100}
\left\{
\begin{array}{ll}
(\p_t-i L)w^+   &= F(w^+)+  \mathcal{N}(w^+ + w^-, w^+ + w^-), \\
(\p_t+i L)w^-  &= F(w^-)+ \mathcal{N} (w^+ + w^-, w^+ + w^-) ,\\
w^+(0,x) &=\mathcal{L}(0)=\f{1}{2} f+ \f{1}{2i} L^{-1} g \in L^2\\
 w^-(0,x) &=\bar{\cl}(0)=\f{1}{2} \bar{f}- \f{1}{2i} L^{-1} \bar{g} \in L^2
\end{array}
\right.
\end{equation}
The term $F(w^\pm)$ creates certain complications, mostly of technical nature, which we now address. Write 
$$
F(w)(s)=2A(s)L^{-1}\p_{xx}=(A_0+s A_1)\cp w,
$$
where $A_0=\f{1}{\pi}\int_0^{2\pi} u_0(x) dx, A_1=\f{1}{\pi}\int_0^{2\pi} u_1(x) dx$ are scalars and $\cp:=L^{-1} \p_{xx}$ is an order zero differential operator, given by the symbol $-\f{|k|}{<k>}$ and hence bounded on all Sobolev spaces. We now resolve the inhomogeneous equation 
$
(\p_t-i L- F)w^+=G
$  (for any right hand side $G$) 
in the following way. Introduce 
$$
w^\pm(s)=e^{(A_0 s+A_1 \f{s^2}{2})\cp} \tw^\pm(s), 
$$
 where $e^{(A_0 s+A_1 \f{s^2}{2})\cp} $ is a bounded operator on any $L^2$ based Sobolev space, which can be represented for example by its power series. We have 
\begin{eqnarray*}
(\p_t-i L) w^+ &=& e^{(A_0 t+A_1 \f{t^2}{2})\cp} (\p_t-i L) \tw^+ + (A_0+t A_1)\cp e^{(A_0 t+A_1 \f{t^2}{2})\cp} \tw^+=\\
&=& e^{(A_0 t+A_1 \f{t^2}{2})\cp} (\p_t-i L) \tw^+ + F[w^+].
\end{eqnarray*}
Thus, 
$$
G=(\p_t-i L-F) w^+=e^{(A_0 t+A_1 \f{t^2}{2})\cp} (\p_t-i L) \tw^+,
$$
whence\footnote{Note that $e^{-(A_0 t+A_1 \f{t^2}{2})\cp}$ is the (bounded) inverse of $e^{(A_0 t+A_1 \f{t^2}{2})\cp}$}
$$
(\p_t-i L) \tw^+=e^{-(A_0 t+A_1 \f{t^2}{2})\cp} G. 
$$
Similar computations work for $w^-$. 
 Thus, we have reduced \eqref{100} to the following equation for $\tw^+$ 
 \begin{equation}
 \label{104}
 (\p_t-i L) \tw^+=e^{-(A_0 t+A_1 \f{t^2}{2})\cp} 
 \mathcal{N}(e^{(A_0 t+A_1 \f{t^2}{2})\cp}(\tw^+ + \tw^-), e^{(A_0 t+A_1 \f{t^2}{2})\cp}(\tw^+ + \tw^-)),
 \end{equation}
 and similar for $w^-$. Observe that $\tw^+(0)=\cl(0)$ and $\tw^-(0)=
\bar{\cl}(0)$. For convenience, introduce the notation 
\begin{equation}
\label{250}
\tilde{\mathcal{N}}(u,v):=e^{-(A_0 t+A_1 \f{t^2}{2})\cp} 
 \mathcal{N}(e^{(A_0 t+A_1 \f{t^2}{2})\cp}u, e^{(A_0 t+A_1 \f{t^2}{2})\cp}v),
\end{equation}
so that our main governing equation \eqref{104}, now takes the form 
$$
(\p_t-i L) \tw^+=
\tilde{\mathcal{N}}(\tw^+ + \tw^-,\tw^+ + \tw^- )
$$

We  note that the operators $e^{\pm(A_0 t+A_1 \f{t^2}{2})\cp}$ are mostly harmless, in the sense that they are bounded on all function spaces considered in the paper. At first  reading, the reader may as well assume that $A_0=A_1=0$ (which corresponds to the important case of mean value zero data) to avoid the cumbersome technical complications. 

\subsection{Construction of the normal forms: the case with mean value zero}
\label{2.2}
We start with the case $A_0=A_1=0$ in order to simplify matters. In the next section, we indicate how to handle the general case. 

  Clearly, we have  
\begin{equation}
\label{10}
\|\mathcal{L}(f,g)\|_{L^2(\mathbf{T})}\leq  \frac{1}{2}\|f\|_{L^2} + \frac{1}{2} \| |\nabla|^{-1} \langle \nabla\rangle^{-1} g\|_{L^2} \sim \|f\|_{L^2} + \|g\|_{H^{-2}}. 
\end{equation} 
We introduce further variables $z^\pm$, so that 
 $w^+=\cl +z^+, w^-=\bar{\cl}+z^-$.  This yields a new set of two equations for the unknowns $z^\pm$. Furthermore, the nonlinearities 
  take one of the following forms:
$$
\mathcal{N}(\mathcal{L}, \mathcal{L}), \quad
\mathcal{N}(\mathcal{L}, \overline{\mathcal{L}}),\quad 
\mathcal{N}(\overline{\mathcal{L}}, \overline{\mathcal{L}}),\quad
\mathcal{N}(\mathcal{L}, z^{\pm}), \quad
\mathcal{N}(\overline{\mathcal{L}}, z^{\pm}),\quad
\mathcal{N}(z^{\pm}, z^{\pm}).
$$
  We   construct an explicit solution, in the form of a bilinear pseudo-differential operator (i.e. a ``normal form''), which will take care of    the first three non-linearities, that is those in the form $\mathcal{N}(\mathcal{L}, \mathcal{L}), \ 
\mathcal{N}(\mathcal{L}, \overline{\mathcal{L}}),\ 
\mathcal{N}(\overline{\mathcal{L}}, \overline{\mathcal{L}})$. That is, we are looking to solve for $\ve=\pm 1$, 
 \begin{equation}
 \label{eqh}
  (\p_t - i \ve \ L) h^\ve = \frac{1}{2i}\left[\mathcal{N}(\mathcal{L}, \mathcal{L}) +
2\mathcal{N}(\mathcal{L}, \overline{\mathcal{L}}) +
\mathcal{N}(\overline{\mathcal{L}},\overline{\mathcal{L}})\right].
\end{equation}
 In order to prepare us for our choice of $h^\ve$, we need to display some algebraic relations for the symbols. More precisely, for $\ve, \ve_1, \ve_2 \in \{ -1, 1\}$, we have 
\begin{align*}
(\tau + \omega) - \ve \sqrt{(\xi+ \eta)^4 + (\xi+ \eta)^2} &= (\tau - \ve_1 \sqrt{\xi^4 + \xi^2} ) + (\omega - \ve_2\sqrt{\eta^4 + \eta^2})\\
	& + \ve_1|\xi| \langle \xi \rangle + \ve_2|\eta|\langle \eta\rangle - \ve |\xi + \eta| \langle \xi + \eta \rangle.
\end{align*}
which implies that for every bilinear pseudo-differential operator $\La_\si$ with symbol $\si(\xi, \eta)$, that is 
$\La_\si(u,v)=\sum_{\xi, \eta \in \cz}\si(\xi, \eta) \hat{u}(\xi) \hat{v}(\eta)e^{i(\xi+\eta)x}$, we have 
\begin{eqnarray*}
& & (\p_t-i L)\La_\si(u,v)=-i(\La_\si((\p_t-i L)u, v)+\La_\si(u,(\p_t-i L) v)+ \La_\mu(u,v)),\\
& & \mu(\xi, \eta)=\si(\xi, \eta)(\ve_1|\xi| \langle \xi \rangle + \ve_2|\eta|\langle \eta\rangle - \ve |\xi + \eta| \langle \xi + \eta \rangle). 
\end{eqnarray*}
In particular, if $u,v$ are free solutions, i.e. $(\p_t-i L)u=(\p_t-i L)v=0$, we get 
$$
(\p_t-i L)\La_\si(u,v)=-i \La_\mu(u,v).
$$
 Thus, we define a bilinear pseudo-differential operator $T$ by the formula 
\begin{equation}\label{normal}
T^{\ve; \ve_1, \ve_2}(u,v)(x) := -\frac{1}{2} \sum_{\xi\eta (\xi+ \eta)\neq 0} 
\frac{|\xi+ \eta| \langle \xi\rangle^{\alpha} \langle \eta \rangle^{\alpha} \widehat{u}(\xi) \widehat{v}(\eta) \, e^{i(\xi+ \eta)x}}{\langle \xi+\eta\rangle^{1+\alpha} [\ve_1 |\xi| \langle \xi \rangle + \ve_2 |\eta|\langle \eta\rangle - \ve|\xi + \eta| \langle \xi + \eta \rangle]}
\end{equation}
we get that 
\begin{eqnarray*}
(\p_t-i \ve L) T^{\ve;+,+}(\cl, \cl)=\f{1}{2 i} \cn(\cl, \cl), \\
(\p_t-i \ve L) T^{\ve;+,-}(\cl, \bar{\cl})=\f{1}{2 i} \cn(\cl, \bar{\cl})\\
(\p_t-i \ve L) T^{\ve;-,-}(\bar{\cl}, \bar{\cl})=\f{1}{2 i} \cn(\bar{\cl}, \bar{\cl}), 
\end{eqnarray*}
which allows us to get a solution of \eqref{eqh} in the form 
 \begin{equation}
 \label{15}
 h^\ve = T^{\ve; +,+}(\mathcal{L}, \mathcal{L}) +
2T^{\ve;+,-}(\mathcal{L}, \overline{\mathcal{L}}) +
T^{\ve;-,-}(\overline{\mathcal{L}},\overline{\mathcal{L}}).
 \end{equation}
 We perform another change of variables,    
 $\Psi^\pm: z^\pm=h^\pm +\Psi^\pm$, so that 
 \begin{equation}
 \label{20}
\left| \begin{array}{l} (\p_t - i L)\Psi^+ =  \mathcal{N}(\mathcal{L}+\overline{\mathcal{L}}, h^\pm + \Psi^{\pm})  + \mathcal{N} (h^\pm +\Psi^{\pm}, h^\pm +\Psi^{\pm})\\
	\Psi^+(0,x)=-[T^{+,+}(\mathcal{L}, \mathcal{L}) +
2T^{+,-}(\mathcal{L}, \overline{\mathcal{L}}) +
T^{-,-}(\overline{\mathcal{L}},\overline{\mathcal{L}})]|_{t=0} , 
	\end{array} \right.
\end{equation}
similar formula holds for  $\Psi^-$. In fact, from now on, we will set $\ve=+1$, since the case $\ve=-1$ can always be reduced to the case $\ve=+1$. Thus, we drop $\ve$ from our notations, for example $T^{\ve_1, \ve_2}$ is used to denote $T^{+1;\ve_1, \ve_2}$ etc. 

With that, we have largely prepared the nonlinear problem to its current form \eqref{20}. Note that by our construction, $\Psi^\pm$ is  a mean value zero function. 
\subsection{Construction of the normal forms: the general case}
\label{2.3}
In the general case, and  having in mind the particular form of the right-hand side of \eqref{104}, we set $\tw^+=\cl+z^+, \tw^-=\bar{\cl}+z^-$. Note $z^\pm(0)=0$. Similar to \eqref{15}, set 
\begin{eqnarray*}
h^\ve &=&  e^{-(A_0 t+A_1 \f{t^2}{2})\cp} T^{\ve; +,+}(e^{(A_0 t+A_1 \f{t^2}{2})\cp} \mathcal{L}, e^{(A_0 t+A_1 \f{t^2}{2})\cp}\mathcal{L}) +\\
&+& 2 e^{-(A_0 t+A_1 \f{t^2}{2})\cp} T^{\ve;+,-}(e^{(A_0 t+A_1 \f{t^2}{2})\cp} \mathcal{L}, e^{(A_0 t+A_1 \f{t^2}{2})\cp}  \overline{\mathcal{L}}) +\\
&+& e^{-(A_0 t+A_1 \f{t^2}{2})\cp} T^{\ve;-,-}(e^{(A_0 t+A_1 \f{t^2}{2})\cp} \overline{\mathcal{L}},e^{(A_0 t+A_1 \f{t^2}{2})\cp}  \overline{\mathcal{L}}).
\end{eqnarray*}
With this assignment for $h^\ve$, we will certainly not 
get the nice exact identity \eqref{eqh}. However, we get something similar (up to an error term), which is good enough for our purposes. Namely, 
\begin{eqnarray*}
(\p_t - i \ve \ L) h^\ve &=&  \tilde{\mathcal N}(\cl, \cl)+  
2 \tilde{\mathcal N}(\cl, \bar{\cl})+\tilde{\mathcal N}(\bar{\cl}, \bar{\cl})
 +Err,
\end{eqnarray*}
where the error term contains all the terms obtained when the time derivative hits the terms 
$e^{\pm(A_0 t+A_1 \f{t^2}{2})\cp}$ in the formula for $h^\ve$. Thus, a typical error term will be 
\begin{equation}
\label{108}
Err\sim e^{-(A_0 t+A_1 \f{t^2}{2})\cp}(-A_0-A_1 t) \cp [T^{\ve; +,+}(e^{(A_0 t+A_1 \f{t^2}{2})\cp} \mathcal{L}, e^{(A_0 t+A_1 \f{t^2}{2})\cp}\mathcal{L})]. 
\end{equation}
Similar to Section \ref{2.2} above, introduce the new variables $\Psi^\pm$, so that 
$z^\pm=h^\pm+\Psi^\pm$. That is, $\tw^+=\cl+h^+ + \Psi^+$, $\tw^-=\bar{\cl}+
h^- + \Psi^-$. We obtain the following equations for $\Psi^\pm$ (note the similarity to \eqref{20}) 
\begin{equation}
\label{220}
\left| \begin{array}{l} (\p_t - i L)\Psi^+ =  \mathcal{\tilde{N}}(\mathcal{L}+\overline{\mathcal{L}}, h^\pm + \Psi^{\pm})  + \mathcal{\tilde{N}} (h^\pm +\Psi^{\pm}, h^\pm +\Psi^{\pm})-Err\\
	\Psi^+(0,x)=-[T^{+,+}(\mathcal{L}, \mathcal{L}) +
2T^{+,-}(\mathcal{L}, \overline{\mathcal{L}}) +
T^{-,-}(\overline{\mathcal{L}},\overline{\mathcal{L}})]|_{t=0}, 
	\end{array} \right.
\end{equation}
Note that for the initial data, that is at $t=0$, 
$$
e^{-(A_0 t+A_1 \f{t^2}{2})\cp} T^{+,+}(e^{(A_0 t+A_1 \f{t^2}{2})\cp} \mathcal{L}, e^{(A_0 t+A_1 \f{t^2}{2})\cp}\mathcal{L})|_{t=0}=T^{+,+}(\mathcal{L}, \mathcal{L}) |_{t=0}. 
$$
etc. whence we get the same initial conditions in \eqref{220} and \eqref{20}. Thus, our equation \eqref{220} will be the main object of interest for the remainder of the paper. 

\subsection{$X^{s,b}$ estimates and embeddings} 
 We now need to state the relevant {\it a priori} estimates for the linear problem
\begin{lemma}
\label{le:5}
Let $m$ solve the linear inhomogeneous problem 
$$
(\p_t - i \ve L)m=F, m(0)=m_0.
$$
Then, for all $T>0$, $s\in\rone$ and $b>1/2$, we have for all cut-off 
functions $\eta\in C^\infty_0$
\begin{equation}
\label{25}
\|\eta(t)m\|_{X^\ve_{s,b}}\leq C_\eta(\|m_0\|_{H^s}+\|F\|_{X^\ve_{s,b-1}}). 
\end{equation}
\end{lemma}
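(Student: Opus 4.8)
\emph{Proof plan.} Estimate \eqref{25} is the classical linear energy estimate for Bourgain-type spaces, and I would establish it by the standard Duhamel/Fourier argument; it suffices to treat $F$ and $m_0$ in the dense classes appearing in \eqref{30}. The first step is to replace the weight $(1+|\tau-\ve n^2|)$ in \eqref{30} by the one adapted to the propagator $e^{i\ve tL}$. The symbol of $L$ is $\om(n):=|n|\sqrt{1+n^2}=\sqrt{n^4+n^2}$, and $0<\om(n)-n^2=\f{n^2}{\sqrt{n^4+n^2}+n^2}<\f12$ for $n\ne0$, so $1+|\tau-\ve n^2|\sim 1+|\tau-\ve\om(n)|$ uniformly in $(n,\tau)$; hence $X^\ve_{s,b}$ coincides, up to equivalent norms, with the space (call it $X_{s,b}$, after also using complex conjugation to reduce to $\ve=+1$) carrying the weight $(1+|\tau-\om(n)|)^{2b}$. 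On the spatial Fourier side $m_n(t):=\hat m(t,n)$ solves $\p_t m_n-i\om(n)m_n=F_n$, $m_n(0)=\hat m_0(n)$, so $m$ is the sum of the free evolution $t\mapsto e^{i\om(n)t}\hat m_0(n)$ and the Duhamel term $t\mapsto\int_0^t e^{i\om(n)(t-\sigma)}F_n(\sigma)\,d\sigma$, and I would estimate the two separately.

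For the free evolution the computation is immediate: the time-Fourier transform of $\eta(t)e^{i\om(n)t}$ is $\hat\eta(\tau-\om(n))$, so inserting it into the $X_{s,b}$-norm gives exactly $C_\eta\|m_0\|_{H^s}$ with $C_\eta=\|\langle\tau\rangle^b\hat\eta\|_{L^2_\tau}<\infty$, finiteness being clear since $\eta\in C^\infty_0$ makes $\hat\eta$ Schwartz. For the Duhamel term I would write $F_n(\sigma)=\int e^{i\sigma\mu}\widehat{F_n}(\mu)\,d\mu$, carry out the $\sigma$-integral, and arrive at the kernel representation
\[
\int_0^t e^{i\om(n)(t-\sigma)}F_n(\sigma)\,d\sigma=\int_\rone\widehat{F_n}(\mu)\,\f{e^{it\mu}-e^{i\om(n)t}}{i(\mu-\om(n))}\,d\mu,
\]
then split the $\mu$-integral into the non-resonant part $|\mu-\om(n)|>1$ and the resonant part $|\mu-\om(n)|\le1$.

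On the non-resonant part the (cutoff) integrand is the sum of $\eta(t)g_n(t)$, with $\widehat{g_n}(\mu)=\mathbf 1_{|\mu-\om(n)|>1}\widehat{F_n}(\mu)/(i(\mu-\om(n)))$, and a free-evolution-type term $\eta(t)e^{i\om(n)t}c_n$, with $c_n=\int_{|\mu-\om(n)|>1}\widehat{F_n}(\mu)/(i(\mu-\om(n)))\,d\mu$. The first is handled by the elementary inequality $\langle x+y\rangle\le\sqrt2\,\langle x\rangle\langle y\rangle$ (used with exponent $b$) together with Young's inequality $L^1\ast L^2\to L^2$ (using $\|\langle\cdot\rangle^b\hat\eta\|_{L^1}<\infty$ and $|\mu-\om(n)|^{-1}\lesssim\langle\mu-\om(n)\rangle^{-1}$ on this set), which bounds its $n$-th contribution by $\|\langle\cdot-\om(n)\rangle^{b-1}\widehat{F_n}\|_{L^2}$; the second is estimated like the free evolution, with $|c_n|\lesssim\|\langle\cdot-\om(n)\rangle^{b-1}\widehat{F_n}\|_{L^2}$ via Cauchy--Schwarz, the weight integral $\int_{|\mu|>1}|\mu|^{-2}\langle\mu\rangle^{-2(b-1)}\,d\mu\sim\int_1^\infty\mu^{-2b}\,d\mu$ being finite precisely because $b>1/2$. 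On the resonant part, where one cannot divide, I would Taylor-expand $\f{e^{it(\mu-\om(n))}-1}{i(\mu-\om(n))}=\sum_{k\ge1}\f{i^{k-1}t^k}{k!}(\mu-\om(n))^{k-1}$, turning the contribution into $\sum_{k\ge1}\f{i^{k-1}}{k!}(t^k\eta(t))\,e^{i\om(n)t}\,d_{n,k}$ with $d_{n,k}=\int_{|\mu-\om(n)|\le1}(\mu-\om(n))^{k-1}\widehat{F_n}(\mu)\,d\mu$; each summand is of free-evolution type with the Schwartz cutoff $t^k\eta$, one has $|d_{n,k}|\lesssim\|\langle\cdot-\om(n)\rangle^{b-1}\widehat{F_n}\|_{L^2}$ uniformly in $k$, and since $\eta$ is compactly supported the constants $C_{t^k\eta}=\|\langle\tau\rangle^b\widehat{t^k\eta}\|_{L^2}$ grow only sub-factorially, so the series converges absolutely in $X_{s,b}$. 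Multiplying by $\langle n\rangle^{2s}$ and summing over $n$ in each of the three pieces produces the bound by $\|F\|_{X_{s,b-1}}$, and assembling them and translating back through the norm equivalence yields \eqref{25}.

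The one genuinely delicate point is the resonant set $|\tau-\om(n)|\lesssim1$, where the Duhamel kernel is singular; the Taylor-expansion device disposes of it at the modest price of an absolutely convergent series in the expansion order, which is harmless because $\eta$ has compact support and $b$ is fixed. The hypothesis $b>1/2$ is used sharply, and only once, in the convergence of $\int_{|\mu|>1}\mu^{-2b}\,d\mu$ in the non-resonant term; everything else is routine bookkeeping, and the argument is the standard one for Bourgain's $X^{s,b}$ spaces.
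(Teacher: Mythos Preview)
Your proposal is correct and follows essentially the same route as the paper: both arguments hinge on the observation that $0<|n|\langle n\rangle-n^2<1$ makes the Schr\"odinger $X^\ve_{s,b}$ norm equivalent to the $L$-adapted norm, after which the standard Bourgain-space linear estimate applies. The only difference is cosmetic: the paper simply invokes Proposition~3.12 of Tao's book \cite{Tao2} for the latter, whereas you write out that standard Duhamel/resonant--non-resonant argument in full.
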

\begin{proof}
The proof is essentially contained in Proposition 3.12 in Tao's book, \cite{Tao2}. More precisely, Proposition 3.12 in \cite{Tao2} establishes estimates like \eqref{25} for arbitrary dispersion relations.  As a result of it, we have  
\begin{equation}
\label{40}
\|\eta(t)m\|_{Y^\ve_{s,b}}\leq C_{\vp} (\|m_0\|_{H^s}+\|F\|_{Y^\ve_{s,b-1}}),
\end{equation}
where  
$$
\|F\|_{Y^\ve_{s,b}}=\left(\int_{\rone} \sum_{n\in \mathbb{Z}\setminus \{0\}} (1+|\tau-\ve |n|\langle n\rangle|)^{2b} \langle n\rangle^{2s}
 |\widehat{F_n}(\tau)|^2 d\tau\right)^{1/2}. 
 $$
 The difference between \eqref{40} and the estimate \eqref{25} is that we insist on using the standard Schr\"odinger $X_{s,b}$ spaces, instead of the less standard $Y_{s,b}$ spaces. But in fact, the two spaces are equivalent. That is, we claim that the symbols are equivalent in the following sense.  More precisely, since  $0<|n|\langle n\rangle -n^2<1$, we have that the two norms $\|\cdot\|_{Y_{s,b}^\ve}$ and $\|\cdot\|_{X_{s,b}^\ve}$ are equivalent (for all values of the parameters $\ve, s,b$) and hence \eqref{40} is equivalent to \eqref{25}, and hence \eqref{25} is established.   
\end{proof}
Next, there is the following important  embedding result, due to Bourgain, \cite{Bour}. 
\begin{lemma}
The following embeddings hold:  $X^\pm_{0,\frac{3}{8}} \subset L^4_{t,x}$ and   $X^\pm_{0+, \frac{1}{2}+} \subset L^6_{t,x}$.  
\end{lemma}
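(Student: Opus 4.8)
\emph{Sketch of approach.} I would deduce both embeddings from Strichartz / lattice-counting estimates for the free periodic Schr\"odinger group on $\ct$, the evolution to which the spaces $X^\pm_{s,b}$ of \eqref{30} are adapted (recall from the proof of Lemma~\ref{le:5} that changing $|n|\langle n\rangle$ into $n^2$ only replaces the norm by an equivalent one). Complex conjugation on the Fourier side carries $X^+_{s,b}$ isometrically onto $X^-_{s,b}$, so it is enough to treat the $+$ spaces. The $L^4$ embedding I would obtain from a direct bilinear $L^2_{t,x}$ estimate (note $\|u\|_{L^4_{t,x}}^2=\|u\bar u\|_{L^2_{t,x}}$), and the $L^6$ embedding by transference from Bourgain's linear $L^6$ Strichartz inequality on $\ct$.

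For $X^+_{0,3/8}\subset L^4_{t,x}$ the core is the bilinear bound
$$\|uv\|_{L^2_{t,x}}\lesssim\|u\|_{X^+_{0,3/8}}\,\|v\|_{X^-_{0,3/8}},$$
which yields the embedding upon taking $v=\bar u$. On the Fourier side, with $n=n_1+n_2$ and $\tau=\tau_1+\tau_2$, one uses the identity
\[
(\tau-n^2)-(\tau_1-n_1^2)-(\tau_2-n_2^2)=-2n_1n_2
\]
(and its analogue with a minus modulation in one slot), so the largest of the three modulations $L_j=\langle\tau_j-n_j^2\rangle$, $j=0,1,2$, controls a quadratic expression in the frequencies. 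I would decompose $u$, $v$ and the product $uv$ dyadically in $L_1$, $L_2$ and the output modulation $L_0$, estimate each block by Cauchy--Schwarz in the $\tau_1$-convolution, observe that the ``low-modulation'' core $L_0\sim L_1\sim L_2\sim1$ of each block is bounded because two integers are determined by their sum and their sum of squares, and use the above gain to supply the extra negative power of the frequencies that makes the remaining $n_1$-sum converge once the modulations are large. The numerology closes exactly at $b=3/8$; summing the dyadic pieces finishes the estimate. This resummation --- balancing the modulation weights against the lattice count precisely at the endpoint --- is the delicate point, and it is Bourgain's argument in \cite{Bour}, which I would follow.

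For $X^+_{0+,1/2+}\subset L^6_{t,x}$ I would start from Bourgain's periodic $L^6$ Strichartz estimate $\|\eta(t)S(t)\phi\|_{L^6_{t,x}}\lesssim_\ve\|\phi\|_{H^\ve}$, where $S(t)$ is the free Schr\"odinger group and $\eta\in C_0^\infty$. Expanding $(S(t)\phi)^3$ on the Fourier side and using orthogonality in $t$, $\|(S(t)\phi)^3\|_{L^2_{t,x}}^2$ is dominated by $\big(\sup_{n,m}\#\{(n_1,n_2,n_3):\ n_1+n_2+n_3=n,\ n_1^2+n_2^2+n_3^2=m\}\big)\,\|\phi\|_{L^2}^6$, and the displayed cardinality --- the number of integer points on a fixed plane conic --- is $O_\ve((|n|+|m|^{1/2})^\ve)$ by the divisor bound; this is what forces the loss of $\ve$ derivatives. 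The standard transference principle --- write $u(t,x)=\int_{\mathbf R}e^{it\lambda}(S(t)\psi_\lambda)(x)\,d\lambda$ with $\widehat{\psi_\lambda}(n)=\widehat u(n,\lambda+n^2)$, apply the linear bound under the integral, then Cauchy--Schwarz in $\lambda$ (the weight $\langle\lambda\rangle^{-(1/2+)}$ being integrable) --- then gives $\|\eta(t)u\|_{L^6_{t,x}}\lesssim\|u\|_{X^+_{\ve,1/2+}}$, which is the asserted embedding. The main obstacle across the lemma is really the sharp exponent $3/8$: crude transference from the linear $L^4$ estimate gives only $b>1/2$, and descending to $3/8$ requires the careful bilinear/counting analysis above, whereas for $L^6$ no comparable sharp-$b$ gain is available and the stated parameters are exactly what the method produces.
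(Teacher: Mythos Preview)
The paper does not prove this lemma; it merely states it and attributes it to Bourgain \cite{Bour}. Your sketch is precisely an outline of Bourgain's arguments from that reference, so in that sense you are aligned with the paper's ``proof.''

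Your outline is essentially correct. A couple of small inaccuracies worth cleaning up: the map carrying $X^+_{s,b}$ isometrically to $X^-_{s,b}$ is complex conjugation in \emph{physical} space (i.e.\ $u\mapsto\bar u$), not on the Fourier side; and the resonance identity you display, $(\tau-n^2)-(\tau_1-n_1^2)-(\tau_2-n_2^2)=-2n_1n_2$, is the one for the product $u\cdot u$ (both factors in $X^+$), whereas for $u\bar u$ with $\bar u\in X^-$ the relevant algebra is $n_1^2-n_2^2=(n_1+n_2)(n_1-n_2)$, so the counting statement becomes ``$n_1,n_2$ are determined by their sum and the difference of their squares'' rather than the sum of squares. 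Either route works and the numerology closes at $b=3/8$ as you say. The $L^6$ part via the divisor bound and the standard transference (foliation by free solutions, Minkowski, then Cauchy--Schwarz in the modulation variable using $b>1/2$) is exactly the right argument and produces the stated $X^\pm_{0+,1/2+}\subset L^6_{t,x}$.
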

The stability of the $X^\ve_{s,b}$ norms with respect to products with smooth functions is the following standard 
\begin{lemma} 
\label{le:11}
For a cut-off functions $\eta\in C^\infty_0$, there is $C=C_\eta$, so that 
$$
\|\eta(t) m\|_{X^\ve_{s,b}}\leq C \|m\|_{X^\ve_{s,b}}.
$$
\end{lemma}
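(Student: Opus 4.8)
The plan is to reduce this to the elementary fact that multiplication by a fixed Schwartz function is bounded on the Bessel potential spaces in the time variable, after conjugating away the dispersion. Introduce the (frequency-truncated) Schr\"odinger group $U^\ve(t):=e^{i\ve t\p_x^2}$, acting on a sequence $m=\{m_n\}_{n\neq 0}$ by $(U^\ve(t)m)_n(t)=e^{-i\ve t n^2}m_n(t)$, so that on the time-Fourier side $\widehat{(U^\ve m)_n}(\tau)=\widehat{m_n}(\tau+\ve n^2)$. A change of variables $\si=\tau-\ve n^2$ in \eqref{30} (using $1+|\si|\sim\langle\si\rangle$) then shows that $m\mapsto U^\ve(\cdot)m$ is an isometry of $X^\ve_{s,b}$ onto $H^b(\rone_t;H^s(\ct))$; that is,
\[
\|m\|_{X^\ve_{s,b}}=\|U^\ve(t)m\|_{H^b_t H^s_x}\qquad\text{for all }s,b\in\rone .
\]

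Next, since $\eta$ depends only on $t$, it commutes with $U^\ve(t)$ (which at each fixed time is just a Fourier multiplier in $x$): $U^\ve(t)[\eta(t)m]=\eta(t)\,U^\ve(t)m$. By the isometry just recorded, the claimed bound is therefore equivalent to the multiplier estimate $\|\eta g\|_{H^b(\rone;X)}\le C_\eta\,\|g\|_{H^b(\rone;X)}$ for functions $g$ with values in a Hilbert space $X$ (here $X=H^s(\ct)$), with $C_\eta$ independent of $X$.

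Finally, I would prove this last estimate on the Fourier side. With the paper's normalization one has $\widehat{\eta g}=\hat\eta*\hat g$, and Peetre's inequality $\langle\tau\rangle^{b}\le 2^{|b|}\langle\tau-\tau'\rangle^{|b|}\langle\tau'\rangle^{b}$ gives
\[
\langle\tau\rangle^{b}\,|\widehat{\eta g}(\tau)|_X\le 2^{|b|}\int_{\rone}\langle\tau-\tau'\rangle^{|b|}\,|\hat\eta(\tau-\tau')|\;\langle\tau'\rangle^{b}\,|\hat g(\tau')|_X\,d\tau'.
\]
Since $\eta\in C^\infty_0$, its transform $\hat\eta$ is Schwartz, so $\langle\cdot\rangle^{|b|}\hat\eta\in L^1(\rone)$, and Young's inequality in $\tau$ (for $X$-valued functions) closes the estimate with $C_\eta=2^{|b|}\|\langle\cdot\rangle^{|b|}\hat\eta\|_{L^1}$. (For integer $b\ge 0$ this is simply the Leibniz rule plus the trivial $L^2$-bound and interpolation; the Peetre argument just handles all real $b$ at once, negative $b$ included.)

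There is no genuine obstacle here — the lemma is a routine structural fact of the $X^{s,b}$ calculus (cf. \cite{Tao2}). The only point needing a little care is \emph{uniformity}: one wants the constant to depend only on $\eta$ (through finitely many Schwartz seminorms of $\hat\eta$) and the fixed exponents, not on the auxiliary spatial space $H^s(\ct)$; the Peetre/Young argument above delivers exactly that, since it uses nothing about $X$ beyond that it is a normed space.
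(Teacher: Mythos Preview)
Your argument is correct and is exactly the standard proof: conjugate by the free propagator to identify $X^\ve_{s,b}$ with $H^b_t H^s_x$, then use Peetre's inequality and Young's inequality to see that multiplication by a Schwartz function in $t$ is bounded on $H^b_t$ uniformly in the spatial variable. The paper does not give a proof at all but simply refers to Lemma~2.11 in Tao's book \cite{Tao2}, whose content is precisely the argument you wrote; one trivial remark is that with the paper's weight $(1+|\tau-\ve n^2|)$ rather than $\langle\tau-\ve n^2\rangle$ the map $m\mapsto U^\ve m$ is only a norm equivalence, not a literal isometry, as you yourself indicate with ``$1+|\si|\sim\langle\si\rangle$''.
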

Lemma \ref{le:11} appears as  Lemma 2.11 in \cite{Tao2}. From the proof of Lemma \ref{le:11}, it can be inferred that for $b\in (1/2,1)$, one can select 
$C_\eta= C (\|\eta\|_{L^1(\rone)}+\|\eta''\|_{L^1(\rone)})$ for some absolute constant $C$. 

As a  corollary, we derive the following estimate, which will be useful for us in the sequel 
\begin{equation}
\label{200}
\|\eta(t) e^{(A t+B t^2)\cp} m\|_{X^\ve_{s,b}}\leq C_{\eta, A, B} \|m\|_{X^\ve_{s,b}}.
\end{equation}
For the proof of \eqref{200}, take more generally a $C^2$ function $g(t)$ instead of $A t+B t^2$. One may expand the operator $e^{g(t)\cp}$ in power series 
$$
e^{g(t)\cp}=\sum_{k=0}^\infty \f{g(t)^k \cp^k}{k!}.
$$
Thus, given that $\|\cp\|\leq 1$, it is enough to show that  
$\|\eta(t)g(t)^k m\|_{X^\ve_{s,b}}\leq C_{k} \|m\|_{X^\ve_{s,b}}$, so that 
$\sum_k \f{C_k}{k!}<\infty$. By the remark above, one could take 
$$
C_k=C(\|\eta(t)g(t)^k\|_{L^1(\rone)}\|+\|(\eta(t)g(t)^k)''\|_{L^1(\rone)}
\leq C  k^2(1+\|g\|_{C^2(-M,M)})^k
$$
where $supp\eta\subset (-M,M)$. Since $\sum^\infty_{k=1} \f{k^2(1+\|g\|_{C^2(-M,M)})^k}{k!}<\infty$, \eqref{200} is established.

 \section{Proof of Theorem \ref{theo:1}} 
After the preparatory Section \ref{sec:1}, we are ready to take on the proof of  Theorem \ref{theo:1}. Let us recapitulate what we have done so far. 
First, we have represented the original problem in the form of \eqref{bous1}, which concern  mean value zero $L^2$ solutions, that is we need to show well-posedness for $L^2\times H^{-2}$ data for the problem \eqref{bous1}.  Next, instead of considering the second order in time equation, we have reduced to the first order in time system of equations for $w^\pm$, \eqref{100}. By an additional change of variables, 
this was replaced by the system \eqref{104} for the slightly modified 
$\tw^\pm$. Next, we have constructed in Section \ref{2.3} explicitly a solution $h^\pm$ 
to the linear inhomogeneous system with right hand sides involving the free solutions. That is, 
$$
\tw^+=\cl+z^+=\cl+h^+ + \Psi^+; \ \ \tw^-=\bar{\cl}+z^-=\bar{\cl}+h^- + \Psi^-.
$$
In terms of $w^\pm$
\begin{equation}
\label{305}
w^+=e^{-(A_0 t+A_1 \f{t^2}{2})\cp}[\cl+h^+ + \Psi^+]; \ \ 
w^-=e^{-(A_0 t+A_1 \f{t^2}{2})\cp}[\bar{\cl}+h^- + \Psi^-].
\end{equation}
Given that, as we pointed out earlier, the operators $e^{(A_0 t+A_1 \f{t^2}{2})\cp}$ are harmless (i.e. they preserve the relevant function spaces) and the explicit structure of $\cl, h^\pm$,  it now remains to resolve the nonlinear equation for $\Psi^\pm$,  \eqref{220}. We will do that, as we have indicated earlier, in the spaces $X^\pm_{\ga, \f{1}{2}+}$, where $\ga<\min (\f{1}{2}, 1-2\al)$. 

Our next lemma shows that the initial data $\Psi^+(0,x)$  is $H^1$ smooth. 
\begin{lemma}
For $0<\alpha <1/2$ and $\ve_1, \ve_2\in \{+1, -1\}$, we have $T^{\ve_1, \ve_2}: L^2 \times L^2 \rightarrow H^{1}$
\end{lemma}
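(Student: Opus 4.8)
I would reduce the statement to a bilinear multiplier bound on Fourier coefficients. Writing $n=\xi+\eta$ and
\[
R(\xi,\eta):=\ve_1|\xi|\langle\xi\rangle+\ve_2|\eta|\langle\eta\rangle-|n|\langle n\rangle ,
\]
the definition \eqref{normal} gives $\widehat{T^{\ve_1,\ve_2}(u,v)}(n)=-\tfrac12\sum_{\xi+\eta=n,\ \xi\eta n\neq0}\frac{|n|\langle\xi\rangle^{\al}\langle\eta\rangle^{\al}}{\langle n\rangle^{1+\al}\,R(\xi,\eta)}\,\hat u(\xi)\hat v(\eta)$. Hence $\|T^{\ve_1,\ve_2}(u,v)\|_{H^1}\lesssim \big\|\sum_{\xi+\eta=n}\Sigma(\xi,\eta)\,|\hat u(\xi)|\,|\hat v(\eta)|\big\|_{\ell^2_n}$, where $\Sigma(\xi,\eta):=\dfrac{|n|\,\langle\xi\rangle^{\al}\langle\eta\rangle^{\al}}{\langle n\rangle^{\al}\,|R(\xi,\eta)|}$, and it suffices to show that $\Sigma$ is the symbol of a bounded bilinear operator $\ell^2\times\ell^2\to\ell^2$. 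Using the symmetry $\widehat{T^{\ve_1,\ve_2}(u,v)}(n)=\widehat{T^{\ve_2,\ve_1}(v,u)}(n)$ I may split the sum into the regions $|\xi|\le|\eta|$ and $|\xi|\ge|\eta|$ and treat only $|\xi|\le|\eta|$.

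The arithmetic heart of the argument is a lower bound on the resonance function $R$. Put $\psi(t)=|t|\langle t\rangle=|t|\sqrt{1+t^2}$; then $\psi$ is even, $\psi|_{[0,\infty)}$ is convex with $\psi(0)=0$, one has $\langle t\rangle\le\psi'(t)\le 2\langle t\rangle$ and $\psi''(t)\gtrsim |t|/\langle t\rangle$ for $t\ge0$, and consequently $\psi$ is superadditive on $[0,\infty)$ ($\psi(a+b)\ge\psi(a)+\psi(b)$). Running through the cases determined by the signs of $\ve_1,\ve_2$ and of $\xi,\eta$ --- each time using superadditivity and $|\xi|\le|n|+|\eta|$ to fix the sign of $R$, and then the mean value theorem together with the two-sided bound on $\psi'$ (or the lower bound on $\psi''$) to estimate the differences that remain --- one obtains, with $\xi_{\min}:=\min(|\xi|,|\eta|)$,
\[
|R(\xi,\eta)|\ \gtrsim\ \langle n\rangle\,\langle\xi_{\min}\rangle \qquad\text{whenever }\xi\eta n\neq0 ,
\]
and in particular $R\neq 0$ there. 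This estimate is sharp, $|R|\sim\langle n\rangle\langle\eta\rangle$, in the regime $|\xi|\sim|\eta|\gg|n|$ with $\ve_1\neq\ve_2$, which is why the exponent $\tfrac12$ will turn out to be the genuine threshold.

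Granting this, I would decompose the $(\xi,\eta)$-sum according to the dyadic sizes of $|\xi|,|\eta|,|n|$. If all three are $O(1)$ the sum is finite and there is nothing to do. If $|\xi|\ll|\eta|\sim|n|$, then $\Sigma\lesssim\langle\xi\rangle^{\al-1}$, which is square-summable in $\xi$ precisely because $\al<\tfrac12$; Cauchy--Schwarz in $\xi$ followed by summation in $n$ closes this case, and the case $|\xi|\sim|\eta|\sim|n|\sim N$ is similar since then $|R|\gtrsim N^2$, $\Sigma\lesssim N^{\al-1}$, and only $O(N)$ frequencies $\xi$ contribute to each $n$. The remaining, and principal, case is the high--high--to--low interaction $|\xi|\sim|\eta|\sim N\gg|n|$: here I would further localize $\xi$ to dyadic shells $|\xi|\sim N$, use $\Sigma\lesssim\langle n\rangle^{-\al}N^{2\al-1}$ (from $|R|\gtrsim\langle n\rangle N$) and Cauchy--Schwarz inside each shell to produce a factor $\|P_Nu\|_{L^2}\|v\|_{L^2}$, then sum the geometric series $\sum_{N\gg\langle n\rangle}N^{2\al-1}\|P_Nu\|_{L^2}\lesssim\langle n\rangle^{2\al-1}\|u\|_{L^2}$ --- convergent because $\al<\tfrac12$ --- to get $\langle n\rangle\,|\widehat{T(u,v)}(n)|\lesssim\langle n\rangle^{\al-1}\|u\|_{L^2}\|v\|_{L^2}$, and finally sum $\sum_n\langle n\rangle^{2\al-2}<\infty$, once again exactly because $\al<\tfrac12$.

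I expect this last case to be the main obstacle. The naive move --- Cauchy--Schwarz in the convolution variable at a fixed output frequency --- diverges as soon as $\al\ge\tfrac14$, because $\Sigma(\xi,n-\xi)\sim\langle n\rangle^{-\al}|\xi|^{2\al-1}$ persists for all large $|\xi|$; the point is to localize the two large, and comparable, input frequencies to a common dyadic shell before applying Cauchy--Schwarz, and only then to sum over shells and over $n$, which recovers the full range $\al<\tfrac12$. (An equivalent route for this case is to write $N^{2\al-1}\sim\langle\xi\rangle^{\al-\frac12}\langle\eta\rangle^{\al-\frac12}$ and reduce it to the Sobolev product estimate $\|fg\|_{H^{-\al}}\lesssim\|f\|_{H^{\frac12-\al}}\|g\|_{H^{\frac12-\al}}$ on $\ct$, valid for $\al<\tfrac12$; both routes rest on the same resonance input.)
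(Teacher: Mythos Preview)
Your argument is correct. The route differs from the paper's in organization rather than in substance. The paper computes the precise size of the symbol $\sigma^{\ve_1,\ve_2}$ separately for each sign pair (obtaining for instance $\sigma^{+,-}(\xi,\eta)\sim\langle\xi\rangle^{\al}/(\langle\xi+\eta\rangle^{1+\al}\langle\eta\rangle^{1-\al})$), then splits only according to which of $|\eta|$, $|\xi|$ dominates and closes each piece by Young's inequality or, in the $(+,-)$ high--high--to--low regime, by the Sobolev product estimate $\|\langle\nabla\rangle^{-\al}[v\cdot\langle\nabla\rangle^{2\al-1}u]\|_{L^2}\lesssim\|u\|_{L^2}\|v\|_{L^2}$ via embedding and H\"older. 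You instead extract a single resonance lower bound $|R|\gtrsim\langle n\rangle\langle\xi_{\min}\rangle$ valid uniformly in the signs, and then run a frequency trichotomy (low--high, high--high--high, high--high--to--low) with a Littlewood--Paley/Cauchy--Schwarz argument for the last case. Your uniform bound is slightly weaker than the paper's sharp symbol sizes in the $(+,+)$ and $(-,-)$ cases (where in fact $|R|\sim\langle\xi\rangle\langle\eta\rangle$ or $\langle\xi_{\max}\rangle^2$), but this costs nothing for the $H^1$ target. Both approaches isolate the same bottleneck --- the $(+,-)$ high--high--to--low interaction --- and both close it exactly for $\al<\tfrac12$; the ``equivalent route'' you mention via $\|fg\|_{H^{-\al}}\lesssim\|f\|_{H^{1/2-\al}}\|g\|_{H^{1/2-\al}}$ is in fact what the paper uses there. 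One minor remark: the step $\sum_{N\gg\langle n\rangle}N^{2\al-1}\|P_Nu\|_{L^2}\lesssim\langle n\rangle^{2\al-1}\|u\|_{L^2}$ is Cauchy--Schwarz in the dyadic index (using $\sum_{N\gg\langle n\rangle}N^{4\al-2}<\infty$) rather than a geometric series per se, but the conclusion stands.
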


\begin{proof}
We define the symbols $\sigma^{\ve_1, \ve_2}$ based on the expression \eqref{normal} so that  
\begin{align*}
T^{\ve_1, \ve_2} (u,v)(x) &= \sum_{\xi, \eta \in \mathbb{Z}} \sigma^{\ve_1,\ve_2} (\xi, \eta) \widehat{u}(\xi) \widehat{v}(\eta) e^{i(\xi+\eta)x}\\
 &= \sum_{\xi \in \mathbb{Z}} \left[\sum_{\eta\in \mathbb{Z}} \sigma^{\ve_1, \ve_2} (\xi - \eta, \eta) \widehat{u}(\xi-\eta) \widehat{v}(\eta)\right] e^{i\xi x}.
\end{align*}
  
Note from the sum in \eqref{normal} that $\sigma^{\ve_1, \ve_2}\equiv 0$ if $\xi \eta (\xi+ \eta) = 0$.  Otherwise, we have
\begin{align*}
\sigma^{-, -}( \xi, \eta) &\sim \frac{ \langle \xi\rangle^{\alpha}\langle \eta \rangle^{\alpha} }{\langle \xi+\eta \rangle^{\alpha}\max (\xi^2, \eta^2)};\\
\sigma^{+, +}( \xi, \eta) &\sim \frac{ 1 }{\langle \xi+\eta \rangle^{\alpha}\langle \xi\rangle^{1-\alpha}\langle \eta \rangle^{1-\alpha}};\\
\sigma^{+, -}( \xi, \eta) &\sim \frac{  \langle \xi\rangle^{\alpha} }{\langle \xi+\eta \rangle^{\alpha+1}\langle \eta \rangle^{1-\alpha}}.
\end{align*}

The following estimates are based on the size of symbols $\sigma^{\pm, \pm}$.  This is justified by taking absolute values on the Fourier side.

Let $u,v \in L^2 (\mathbf{T})$.  Then
\begin{align*}
\|T^{+,+}(u,v)\|_{H^{1}} &\sim \|\sum_{\eta\in \mathbf{Z}}  \frac{\langle \xi\rangle^{1-\alpha} } {\langle \xi -\eta \rangle^{1-\alpha}\langle \eta \rangle^{1-\alpha}}\widehat{u}(\xi- \eta) \widehat{v}(\eta) \|_{L^2_{\xi} (\mathbf{Z})}\\
&\lesssim \|\sum_{|\eta|\ll |\xi|}  \frac{\widehat{u}(\xi- \eta) \widehat{v}(\eta)} { \langle \eta \rangle^{1-\alpha}} \|_{L^2_{\xi}} + \| \sum_{|\eta | \gtrsim |\xi|}\frac{ \widehat{u}(\xi- \eta) \widehat{v}(\eta) } {\langle \xi-\eta \rangle^{1-\alpha} }   \|_{L^2_{\xi}}\\ 
&\lesssim \|\widehat{u}\|_{L^2_{\xi}}\sum_{\eta\in \mathbf{Z}}  \frac{|\widehat{v}|(\eta)} {\langle \eta \rangle^{1-\alpha}}   +  \|\frac{ \widehat{u}(\cdot)}{\langle \cdot\rangle^{1-\alpha}} \|_{L^1_{\xi}} \|\widehat{v}\|_{L^2_{\xi}}\\
&\lesssim \|u\|_{L^2(\mathbf{T})} \|v\|_{L^2(\mathbf{T})}.
\end{align*}

\begin{align*} 
\|T^{+,-}(u,v)\|_{H^{1}} &\sim \|\sum_{\eta\in \mathbf{Z}}  \frac{ \langle \xi -\eta \rangle^{\alpha} } {\langle \xi\rangle^{\alpha} \langle \eta \rangle^{1-\alpha}}\widehat{u}(\xi- \eta) \widehat{v}(\eta) \|_{L^2_{\xi} (\mathbf{Z})}\\
&\lesssim \|\sum_{|\eta|\ll |\xi|}  \frac{1} { \langle \eta \rangle^{1-\alpha}}\widehat{u}(\xi- \eta) \widehat{v}(\eta) \|_{L^2_{\xi}} + \| \frac{ 1 } {\langle \xi\rangle^{\alpha}}\sum_{|\eta | \gtrsim |\xi|}  \frac{\widehat{u}(\xi- \eta) \widehat{v}(\eta)}{ \langle \xi-\eta\rangle^{1-2\alpha}} \|_{L^2_{\xi}}\\ 
&\lesssim \|\widehat{u}\|_{L^2_{\xi}}\sum_{\eta\in \mathbf{Z}}  \frac{|\widehat{v}|(\eta)} {\langle \eta \rangle^{1-\alpha}}   +  
 \| \langle \nabla \rangle^{-\alpha}[v\cdot \langle \nabla \rangle^{2\alpha-1}u] \|_{L^2_{x}(\mathbf{T})}
 \\ 
&\lesssim \|u\|_{L^2(\mathbf{T})} \|v\|_{L^2(\mathbf{T})}
\end{align*}
where we have used Sobolev embedding and H\"older's inequality to obtain
\begin{align*}
 \| \langle \nabla \rangle^{-\alpha}[ v\cdot \langle \nabla \rangle^{2\alpha-1}u ]\|_{L^2_{x}(\mathbf{T})} &\lesssim \| v\cdot \langle \nabla \rangle^{2\alpha-1}u\|_{L^{\frac{2}{2\alpha+1}}_x(\mathbf{T})}\\
 	&\lesssim \|v\|_{L^2_x(\mathbf{T})} \|\langle \nabla \rangle^{2\alpha-1}u\|_{L^{\frac{1}{\alpha}}_x(\mathbf{T})}\\
 	&\lesssim \|u\|_{L^2_x(\mathbf{T})} \|v\|_{L^2_x(\mathbf{T})}.
 \end{align*}
The estimate for $T^{-,-}$ follows from the fact $\sigma^{-,-}\leq \sigma^{+,+}$ and we are done. 
\end{proof}
 
 \subsection{Reducing the proof to bilinear and trilinear estimates}

Assume for a moment that for some $\gamma>0$, $\Psi^+ \in X^+_{\gamma, 1/2+}$.  Then in the equation~\eqref{20} for $\Psi^+$, the right-hand side consists of   nonlinearities in the form $\mathcal{\tilde{N}}(u,v)$ where 
$$
(u,v) \in [X^{\pm}_{\gamma,\frac{1}{2}+} \times X^{\pm}_{0,\frac{1}{2}+}] \cup \left[L^{\infty}_t H^1_x \times L^{\infty}_t H^1_x \right] \cup [ L^{\infty}_t H^1_x \times X^{\pm}_{0,\frac{1}{2}+}].
$$
Therefore, in order to prove the theorem (as a result of a contraction argument in 
$X^+_{\ga,\f{1}{2}+}$), we need to control the nonlinear terms in appropriate norms. More precisely, we shall need following estimates for $\ve_1, \ve_2 \in \{ -1, 1\}$ in order to proceed with the standard contraction argument:
\begin{eqnarray}
\label{105}
\|\mathcal{\tN} (u,v)\|_{X^{+}_{\gamma, -\frac{1}{2}+}} &\lesssim \|u\|_{X^{\ve_1}_{\gamma, \frac{1}{2}+}} \|v\|_{X^{\ve_2}_{0,\frac{1}{2}+}}\\
\label{110}
\|\mathcal{\tN} (u,v)\|_{X^{+}_{\gamma, -\frac{1}{2}+}} &\lesssim \|u\|_{L^{\infty}_t H^1_x} \|v\|_{L^{\infty}_t H^1_x}.
\end{eqnarray}
In addition, we would have liked to have 
\begin{equation}
\label{115}
\|\mathcal{\tN} (u,v)\|_{X^{+}_{\gamma, -\frac{1}{2}+}} \lesssim \|u\|_{L^{\infty}_t H^1_x} \|v\|_{X^{\ve_1}_{0,\frac{1}{2}+}}
\end{equation}
but {\it this  estimate  turns out to be false}. On the other hand, the entry $u$ is not just an arbitrary $L^\infty_t H^1_x$ function, but rather a bilinear expression in the form $T^{\ve_1, \ve_2}(e^{\pm it L} f, e^{\pm it L} g)$. Due to this fact, we replace \eqref{115} with a {\it tri-linear} estimate, see Lemma \ref{le:30} below. 

We also make the observation that in what follows, we can replace $\tN$ by $\cn$. Indeed, referring to \eqref{250} and taking into account that $e^{(A_0 t+A_1 \f{t^2}{2})\cp}$ preserves $X^\pm_{s,b}$,  we have 
$$
\|\eta(t) \tN(u,v)\|_{X^\pm_{s,b}}\leq C_\eta \|\mathcal{N}(e^{(A_0 t+A_1 \f{t^2}{2})\cp}u, e^{(A_0 t+A_1 \f{t^2}{2})\cp}v)\|_{X^\pm_{s,b}}.
$$
Note that for $\tilde{u}=e^{(A_0 t+A_1 \f{t^2}{2})\cp}u$, 
we have from \eqref{200} that  $\|\tilde{u}\|_X\leq \|u\|_X$ for all function spaces that appear in \eqref{105} and \eqref{110} and hence, it suffices to establish \eqref{105} and \eqref{110}  with $\tN$ replaced by ${\mathcal N}$. 

We state the following results, which will be our main technical tools in order to finish the proof of Theorem \ref{theo:1}. In them, we assume $\gamma \geq 0$.

Our next lemma is a proof of \eqref{105}. 
\begin{lemma}
\label{le:20}
For $u,v$ smooth and $0\leq\alpha<1/2$, let $\ga$ be such that $2\alpha -1/2 < \gamma< 1/2$. Then 
$$
\|\mathcal{N} (u,v)\|_{X^{+}_{\gamma, -\frac{1}{2}+}} \lesssim \|u\|_{X^{\ve_1}_{\gamma, \frac{1}{2}+}} \|v\|_{X^{\ve_2}_{0,\frac{1}{2}+}}.
$$
\end{lemma}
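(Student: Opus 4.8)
The plan is to pass everything to the Fourier side, reduce to a bilinear multiplier estimate, and then split into a handful of frequency regions governed by the size of the modulation/resonance function. First I would write out the symbol of $\mathcal N(u,v)$ explicitly: testing against $\langle \xi+\eta\rangle^{\gamma}$ weights and dualizing against $\|\cdot\|_{X^-_{-\gamma,\frac12-}}$, the estimate \eqref{105} reduces to bounding
\[
\sum \frac{|\xi+\eta|\,\langle\xi+\eta\rangle^{\gamma}\langle\xi\rangle^{\alpha}\langle\eta\rangle^{\alpha}}
{|\xi+\eta|\langle\xi+\eta\rangle\,\langle\xi+\eta\rangle^{\alpha}\,\langle\xi\rangle^{\gamma}}\,
\widehat{u}(\xi)\widehat{v}(\eta)\overline{\widehat{w}(\xi+\eta)}
\]
integrated over the $\tau$-variables, where $w\in X^-_{0,\frac12-}$ (after relabeling $-\gamma$ to $0$ by absorbing the weight). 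After cancellation the spatial symbol is $\lesssim \langle\xi+\eta\rangle^{\gamma-1}\langle\xi\rangle^{\alpha-\gamma}\langle\eta\rangle^{\alpha}/\langle\xi+\eta\rangle^{\alpha}$; the worst case is when $|\xi|\sim|\xi+\eta|$ is the top frequency, giving roughly a gain of $\langle\text{top}\rangle^{-1+\alpha}$ times a loss $\langle\eta\rangle^{\alpha}$ on the low frequency. So morally this is the quadratic Boussinesq/NLS estimate with a net derivative gain of $1-\alpha$ on top and a loss of $\alpha$ on the bottom.

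Next I would invoke the standard resonance identity already displayed in the excerpt,
\[
(\tau+\omega-\sqrt{(\xi+\eta)^4+(\xi+\eta)^2}) = (\tau-\varepsilon_1\sqrt{\xi^4+\xi^2})+(\omega-\varepsilon_2\sqrt{\eta^4+\eta^2}) + h(\xi,\eta),
\]
with $h(\xi,\eta)=\varepsilon_1|\xi|\langle\xi\rangle+\varepsilon_2|\eta|\langle\eta\rangle-|\xi+\eta|\langle\xi+\eta\rangle$, so that the largest of the three modulations $\langle\tau-\varepsilon_1\sqrt{\xi^4+\xi^2}\rangle$, $\langle\omega-\varepsilon_2\sqrt{\eta^4+\eta^2}\rangle$, $\langle(\tau+\omega)-\sqrt{(\xi+\eta)^4+(\xi+\eta)^2}\rangle$ dominates $|h(\xi,\eta)|$. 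The key algebraic input is a lower bound on $|h|$: when $\varepsilon_1=\varepsilon_2$ (the $(+,+)$ and $(-,-)$ cases in disguise) one gets $|h|\gtrsim \min(|\xi|,|\eta|)\cdot\max(|\xi|,|\eta|,|\xi+\eta|)$ in the non-degenerate regime, i.e. roughly $|h|\gtrsim\langle\eta\rangle\langle\text{top}\rangle$ when $\eta$ is the low frequency; this is exactly enough to beat the $\langle\eta\rangle^{\alpha}$ loss and part of the $\langle\text{top}\rangle$ loss. In the mixed-sign case $\varepsilon_1\neq\varepsilon_2$ there can be near-resonances when $|\xi|\sim|\eta|\sim|\xi+\eta|$ are all comparable, but there the symbol already has the full gain $\langle\text{top}\rangle^{-1+\alpha-\alpha}=\langle\text{top}\rangle^{-1}$ up to the $\langle\eta\rangle^{\alpha}\le\langle\text{top}\rangle^\alpha$ loss, leaving $\langle\text{top}\rangle^{-1+\alpha}$, which is summable for $\alpha<1/2$ after a Cauchy–Schwarz in the output frequency; the condition $\gamma>2\alpha-1/2$ enters precisely to close this borderline region.

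Concretely I would split into: (i) the region where the output modulation is dominant — here one uses $X^-_{0,\frac12-}$ barely fails but one trades $\langle\text{mod}\rangle^{1/2-}\gtrsim|h|^{1/2-}\gtrsim(\langle\eta\rangle\langle\text{top}\rangle)^{1/2-}$, combined with an $L^4_{t,x}$ Strichartz (the embedding $X^\pm_{0,3/8}\subset L^4_{t,x}$ from the Bourgain lemma) on the two input factors after removing the gained derivatives; (ii) the region where an input modulation dominates — symmetric, use the high modulation on that factor and $L^4\times L^4\to L^2$ on the remaining pair; (iii) the non-resonant-but-comparable region handled by brute force as above. In each region the weight count works out to leave a power $\langle\text{top}\rangle^{-\delta}$ with $\delta=\delta(\alpha,\gamma)>0$ under the stated hypotheses, and summation in the dyadic pieces is routine. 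The main obstacle I anticipate is the mixed-sign comparable-frequency region (iii): the resonance function $h$ degenerates there, so one cannot rely on modulation at all and must extract the entire gain from the symbol itself, which is why the sharp constraint $\gamma>2\alpha-1/2$ is needed and why the naive estimate \eqref{115} fails — this is the same mechanism the authors flag just before the lemma.
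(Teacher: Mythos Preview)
Your overall strategy---dualize, invoke the algebraic identity so that the largest modulation dominates the resonance function $h$, place two of the three factors in $L^4_{t,x}$ via the embedding $X^\pm_{0,3/8}\hookrightarrow L^4$, and split into cases by the signs $(\varepsilon_1,\varepsilon_2)$---is exactly the paper's proof, and it closes.

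You have, however, misidentified the region that forces $\gamma>2\alpha-\tfrac12$. In the mixed-sign case the resonance function is (up to $O(1)$) $h\approx -2\eta(\xi+\eta)$ or $h\approx -2\xi(\xi+\eta)$; this does \emph{not} degenerate when $|\xi|\sim|\eta|\sim|\xi+\eta|$ are all comparable---there $|h|\sim N^2$ and the region is harmless. The genuinely bad interaction is high--high$\to$low: $|\xi|\sim|\eta|\sim N$ with output $|\xi+\eta|\ll N$. There $|h|\sim |\xi+\eta|\cdot N$, so when $|\xi+\eta|\sim 1$ one only gets $L_{\max}\gtrsim N$ (not $N^2$), and the multiplier bound becomes $N^{2\alpha-\gamma}/L_{\max}^{1/2-}\lesssim N^{2\alpha-\gamma-1/2+}$, which is precisely the constraint $\gamma>2\alpha-\tfrac12$. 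So modulation \emph{does} help in the critical region, just only linearly in $N$; your claim that one ``cannot rely on modulation at all'' there is incorrect. (Your closing remark also conflates two separate issues: the failure of \eqref{115} concerns $\mathcal N(h,\cdot)$ with $h\in L^\infty_t H^1_x$ and is addressed by the trilinear Lemma~\ref{le:30}, not by the present lemma.) Once you relocate the critical region, the rest of your sketch goes through as written.
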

 The next lemma concerns \eqref{110}. More precisely, we have 
 \begin{lemma}
 \label{le:25}
For $u,v$ smooth and $0\leq\alpha<1/2$, let $\gamma: \,\gamma<1/2$. 
$$
\|\mathcal{N} (u,v)\|_{X^{+}_{\gamma, -\frac{1}{2}+}} \lesssim \|u\|_{L^{\infty}_t H^1_x} \|v\|_{L^{\infty}_t H^1_x}.
$$
 
\end{lemma}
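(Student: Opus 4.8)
The plan is to prove Lemma~\ref{le:25} by the same ``absolute values on the Fourier side'' philosophy used in the earlier lemma bounding $T^{\ve_1,\ve_2}$, combined with the standard duality pairing for $X^\ve_{s,b}$ spaces. First I would note that it suffices to bound $\|\mathcal{N}(u,v)\|_{X^+_{\ga,-1/2+}}$ by dualizing against a test sequence $w$ with $\|w\|_{X^-_{-\ga,1/2-}}\le 1$ (using $(X^+_{\ga,-1/2+})^*=X^-_{-\ga,1/2-}$), reducing matters to a trilinear expression $\sum \sigma(\xi,\eta)\widehat u(\xi)\widehat v(\eta)\overline{\widehat w(\xi+\eta)}$ integrated in the time-frequency variables. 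Here the symbol of $\mathcal{N}(u,v)=L^{-1}\nab^{-\al}\p_x^2(\naba u\,\naba v)$ is comparable to
$$
\frac{|\xi+\eta|\,\langle\xi\rangle^\al\langle\eta\rangle^\al}{\langle\xi+\eta\rangle^{1+\al}}\le \frac{\langle\xi\rangle^\al\langle\eta\rangle^\al}{\langle\xi+\eta\rangle^\al},
$$
and crucially, since the two inputs live in $L^\infty_t H^1_x$, I get to pay a full factor $\langle\xi\rangle^{-1}\langle\eta\rangle^{-1}$ from the $H^1$ norms. Thus the effective multiplier acting on $\|u\|_{L^\infty_t H^1_x}\|v\|_{L^\infty_t H^1_x}$ is at most $\langle\xi\rangle^{\al-1}\langle\eta\rangle^{\al-1}\langle\xi+\eta\rangle^{\ga-\al}$ (the $\langle\xi+\eta\rangle^\ga$ coming from the $X^+_{\ga,\cdot}$ norm on the output), which since $\al<1/2$ and $\ga<1/2$ has summable weights.

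Next I would dispose of the time-frequency (the $\tau$, or rather the $b$-index) part. Because the output norm carries the weak index $b=-1/2+<0$ and the inputs are merely $L^\infty_t H^1_x$, I would first localize in time via a cutoff $\eta(t)$ (harmless by Lemma~\ref{le:11} and the fact that we only need a local statement), then use the elementary embedding $L^\infty_t H^1_x\cap(\text{compact time support})\hookrightarrow X^\ve_{1,0}$ after the cutoff, or more directly estimate $\|\eta(t)\mathcal N(u,v)\|_{X^+_{\ga,-1/2+}}\lesssim \|\eta(t)\mathcal N(u,v)\|_{L^2_t H^\ga_x}$ (valid since $-1/2+<0$ means the $X_{s,b}$ norm with negative $b$ is dominated by the $b=0$ norm, i.e. $L^2_tH^s_x$), and then $\|\eta(t)\mathcal N(u,v)\|_{L^2_tH^\ga_x}\lesssim \|\mathcal N(u,v)\|_{L^\infty_tH^\ga_x}$ by pulling the $L^2_t$ norm of the cutoff out. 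This cleanly separates the problem into a purely spatial (fixed-time) multilinear estimate: for each fixed $t$,
$$
\|\mathcal N(u(t),v(t))\|_{H^\ga_x(\ct)}\lesssim \|u(t)\|_{H^1_x}\|v(t)\|_{H^1_x}.
$$

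The heart of the matter is then this fixed-time estimate, which I would prove exactly as in the $T^{\ve_1,\ve_2}$ lemma: write the $H^\ga$ norm as an $\ell^2_\xi$ norm of $\sum_\eta \langle\xi\rangle^\ga \sigma(\xi-\eta,\eta)\widehat u(\xi-\eta)\widehat v(\eta)$, bound $\langle\xi\rangle^\ga\sigma\lesssim \langle\xi-\eta\rangle^{\al-1}\langle\eta\rangle^{\al-1}\langle\xi\rangle^{\ga-\al}$, split into the regions $|\eta|\ll|\xi|$ (so $\langle\xi\rangle\sim\langle\xi-\eta\rangle$ and the surviving weight is $\langle\xi-\eta\rangle^{\ga-1}\langle\eta\rangle^{\al-1}$, which after relabeling is handled by Young's inequality $\ell^2*\ell^1\to\ell^2$ since $\ga<1/2\Rightarrow 1-\ga>1/2$ gives $\langle\eta\rangle^{\al-1}\in\ell^1$... wait, more carefully use the $\langle\eta\rangle^{\al-1}$-weighted $\ell^1$ factor which is summable iff $\al<1/2$) and $|\eta|\gtrsim|\xi|$ (symmetric, or estimate via Sobolev embedding $H^{1-\al}\hookrightarrow L^{1/\al}$ on the torus together with H\"older exactly as the earlier proof does for $T^{+,-}$). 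The main obstacle I anticipate is the borderline counting in the regime $|\xi|\sim|\eta|$ with $|\xi+\eta|$ small (near-resonant cancellation is \emph{not} available here since $\mathcal N$ has no resonance denominator), where one must verify that $\langle\xi+\eta\rangle^{\ga-\al}$ does not blow up — but since the numerator $|\xi+\eta|$ in the symbol kills the $\langle\xi+\eta\rangle^{-1-\al}$ down to $\langle\xi+\eta\rangle^{-\al}$, and $\ga-\al<1/2$, one still has enough room; this is precisely why the hypothesis $\ga<1/2$ (rather than $\ga<1-2\al$) is the only constraint needed for this particular lemma.
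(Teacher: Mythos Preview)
Your proposal is correct and shares the paper's main reduction: both bound the $X^+_{\ga,-1/2+}$ norm by the stronger $L^2_tH^\ga_x$ norm (the paper phrases this as ``ignoring the gain due to $\lambda^{1/2-}$''), so neither uses the modulation weight at all. The difference lies in how the resulting spatial estimate is handled. The paper keeps the time variable in play, splits the weights asymmetrically as $\langle\xi_1\rangle^{\al-1}\langle\xi_2\rangle^{\al-1/2+\de}$, applies H\"older in $(t,x)$ to land one factor in $L^\infty_TL^2_x$ and the other in $L^2_TL^\infty_x$, and then invokes $H^{1/2+}\subset L^\infty_x$. You instead pass immediately to a fixed-time bound $\|\cn(u(t),v(t))\|_{H^\ga_x}\lesssim\|u(t)\|_{H^1_x}\|v(t)\|_{H^1_x}$, put the full $\langle\xi_j\rangle^{-1}$ on each input symmetrically, and close by Young's inequality $\ell^2*\ell^1\to\ell^2$ (the $\ell^1$ factor coming from Cauchy--Schwarz and $\langle\cdot\rangle^{\al-1}\in\ell^2$ for $\al<1/2$). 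Your route is marginally more elementary in that it avoids the mixed-norm H\"older step; the paper's route is marginally more systematic in that it isolates a single multiplier constant $M_2$ and checks it is bounded. Both arrive at exactly the same constraints $\al<1/2$, $\ga<1/2$.
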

Finally, we deal with the tri-linear case, which is necessitated due to the failure of the appropriate  bilinear estimate. 
\begin{lemma}
\label{le:30}
For $0\leq\alpha<1/2$ and  $\gamma < \min(1- 2\alpha, 1/2)$, and $u,v,w$ smooth, 
$$
\|\mathcal{N} (T^{\ve_1,\ve_2} (u, v), w)\|_{X^+_{\gamma, -\frac{1}{2}+}} \lesssim \|u\|_{X^{\ve_1}_{0,\frac{1}{2}+}} \|v\|_{X^{\ve_2}_{0,\frac{1}{2}+}} \|w\|_{X^{\ve_3}_{0,\frac{1}{2}+}}.
$$
\end{lemma}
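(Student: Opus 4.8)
The plan is to pass to the Fourier side and estimate everything in terms of the absolute values of the symbols. Writing $w_1 = T^{\ve_1,\ve_2}(u,v)$, we first record, as in the proof that $T^{\ve_1,\ve_2}\colon L^2\times L^2\to H^1$, the pointwise bounds
$$
|\widehat{w_1}(\mu)| \lesssim \sum_{\mu_1+\mu_2=\mu}\frac{|\mu|\,\langle\mu_1\rangle^\al\langle\mu_2\rangle^\al}{\langle\mu\rangle^{1+\al}\,|\ve_1|\mu_1|\langle\mu_1\rangle+\ve_2|\mu_2|\langle\mu_2\rangle-|\mu|\langle\mu\rangle|}\,|\widehat{u}(\mu_1)|\,|\widehat{v}(\mu_2)|,
$$
and then expand $\cn(w_1,w)$ so that the full trilinear object has frequencies $\mu_1,\mu_2$ (from $u,v$) and $\mu_3$ (from $w$) adding up to the output frequency $n=\mu_1+\mu_2+\mu_3$. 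The worst denominator comes from the resonance $\ve_1|\mu_1|\langle\mu_1\rangle+\ve_2|\mu_2|\langle\mu_2\rangle-|\mu_1+\mu_2|\langle\mu_1+\mu_2\rangle$; the standard fact here (which I would verify by elementary case analysis on the signs $\ve_1,\ve_2$ and the relative sizes of $|\mu_1|,|\mu_2|$) is that this quantity is comparable to $\min(|\mu_1|,|\mu_2|)\cdot\max(|\mu_1|,|\mu_2|,|\mu_1+\mu_2|)\cdot\langle\max\rangle$ away from the near-cancellation regime, and that the genuinely small-denominator region ($\ve_1=-\ve_2$ with $|\mu_1|\approx|\mu_2|$ and $|\mu|=|\mu_1+\mu_2|$ small) is controlled because the numerator carries the compensating factor $|\mu|=|\mu_1+\mu_2|$.

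Next I would split into the two structural cases according to which of the three input frequencies is the maximum. \textbf{Case A: $|\mu_3|\gtrsim|\mu_1|,|\mu_2|$} (the new frequency dominates). Then $|n|\lesssim\langle\mu_3\rangle$, the $\langle\nabla\rangle^{\pm\al}$ weights in $\cn$ and in the symbol of $w_1$ essentially cancel, and after using the denominator bound for $w_1$ we are left with a harmless bilinear-in-$(u,v)$ factor (bounded via the $H^1$-boundedness argument already in the paper) times the $L^4_{t,x}$ norm of $w$ and of a free-type solution built from $u,v$; here one invokes $X^\pm_{0,3/8}\subset L^4_{t,x}$ (so $X^\pm_{0,1/2+}\subset L^4_{t,x}$) together with the $L^2_t$-duality against $X^+_{-\ga,1/2-}$, the regularity loss $\langle n\rangle^\ga$ being absorbed since $\ga<1/2\le 1-2\al$. \textbf{Case B: $|\mu_1|$ or $|\mu_2|\gtrsim|\mu_3|$ and $\gtrsim$ the other} (a frequency inside $w_1$ dominates). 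This is the delicate case that forces the trilinear formulation: the gain $\langle\mu_i\rangle^{-(1-2\al)}$ coming from combining the $T$-symbol denominator with the outer $\cn$-denominator is exactly what the false bilinear estimate \eqref{115} lacked, and it is this gain that permits the regularity exponent $\ga<1-2\al$. I would estimate the resulting multiplier by $\langle\mu_{\max}\rangle^{\ga-(1-2\al)}\lesssim 1$ (using $\ga<1-2\al$), reducing matters again to a product of three $L^4_{t,x}$-type or $L^4$-$L^4$-$L^2$ norms controlled by the three $X^{\ve_j}_{0,1/2+}$ norms via Bourgain's embedding and Hölder in $(t,x)$, with the $\tau$-modulation weights $(1+|\tau_j-\ve_j|\mu_j|\langle\mu_j\rangle|)^{1/2+}$ supplying the $b=1/2+$ room needed to sum and to handle the $-1/2+$ on the output.

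The main obstacle I anticipate is the careful bookkeeping of the resonance denominator together with the weight juggling: one must show in every sign/size configuration that
$$
\frac{|\mu_1+\mu_2|\,\langle\mu_1\rangle^\al\langle\mu_2\rangle^\al}{\langle\mu_1+\mu_2\rangle^{1+\al}\,\bigl|\ve_1|\mu_1|\langle\mu_1\rangle+\ve_2|\mu_2|\langle\mu_2\rangle-|\mu_1+\mu_2|\langle\mu_1+\mu_2\rangle\bigr|}\cdot\frac{\langle n\rangle^\ga\,|n|\,\langle\mu_1+\mu_2\rangle^\al\langle\mu_3\rangle^\al}{\langle n\rangle^{1+\al}}
$$
is bounded (after distributing it suitably over the three factors) by a constant times symbols that are $\le 1$, so that one is genuinely reduced to an $L^4\cdot L^4\cdot L^2$ Hölder estimate. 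In particular the near-resonant region $\ve_1=-\ve_2$, $|\mu_1|\approx|\mu_2|\gg 1$, $|\mu_1+\mu_2|$ small needs the cancellation $|\mu_1+\mu_2|$ in the numerator to beat the small denominator, and the region where $|\mu_3|$ is comparable to $|\mu_1|\approx|\mu_2|$ needs the full strength of $\ga<1-2\al$; organizing these cases cleanly, rather than any single estimate, is where the real work lies.
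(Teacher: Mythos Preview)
Your plan has a genuine gap: you rely entirely on the gain from the $T^{\ve_1,\ve_2}$-denominator (the \emph{bilinear} resonance function $\ve_1|\mu_1|\langle\mu_1\rangle+\ve_2|\mu_2|\langle\mu_2\rangle-|\mu_1+\mu_2|\langle\mu_1+\mu_2\rangle$) and never invoke the \emph{four-wave} modulation identity that actually drives the estimate. After dualizing against $z\in X^-_{0,\frac12-}$ one has $\xi_1+\xi_2+\xi_3+\xi_4=0$, $\tau_1+\tau_2+\tau_3+\tau_4=0$, and hence
\[
\sum_{j=1}^4(\tau_j-\ve_j\xi_j^2)\;=\;-\sum_{j=1}^4\ve_j\xi_j^2,
\]
which factors explicitly (e.g.\ as $2(\xi_1+\xi_2)(\xi_2+\xi_3)$ when $(\ve_1,\ve_2,\ve_3,\ve_4)=(+,-,+,-)$). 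This forces the maximal modulation $L_{\max}$ to dominate a polynomial in the frequencies, and \emph{that} is the mechanism converting the $b=\tfrac12+$ room into frequency decay. Nothing in your outline computes or uses this quantity.

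Your Case~A is precisely where this bites. Take $\ve_1=+$, $\ve_2=-$ and $\mu_1=1$, $\mu_2=-2$, $\mu_3=N$, so $n=N-1$. The $T$-denominator is $O(1)$, the $T$-symbol is $O(1)$, and the full multiplier (including $\langle n\rangle^\gamma$ from the output norm) is $\sim N^\gamma$, unbounded. An $L^4\times L^4\times L^4\times L^4$ H\"older step gives no frequency gain, so $N^\gamma$ cannot be absorbed; your appeal to ``$\gamma<1/2\le 1-2\alpha$'' does not help (and the inequality $1/2\le 1-2\alpha$ is false once $\alpha>1/4$), while routing through the $H^1$-boundedness of $T$ lands you back on the bilinear estimate \eqref{115}, which the paper explicitly notes is false. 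The paper instead uses the identity above to obtain $L_{\max}\gtrsim N$ in this regime, places the high-modulation factor in $L^2_{t,x}$, and puts the remaining three factors in $L^6_{t,x}$ via $X^{0+,\frac12+}\subset L^6$; this recovers $N^{-1/2+}$ and produces the constraint $\gamma<1/2$. Your Case~B analysis (all frequencies $\sim N$, symbol $\sim N^{2\alpha+\gamma-1}$, hence $\gamma<1-2\alpha$) is correct in spirit and matches the paper's Case~1B, but it covers only one of several regimes.
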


\begin{figure}[!h]
\begin{center}
\scalebox{0.50}{\includegraphics{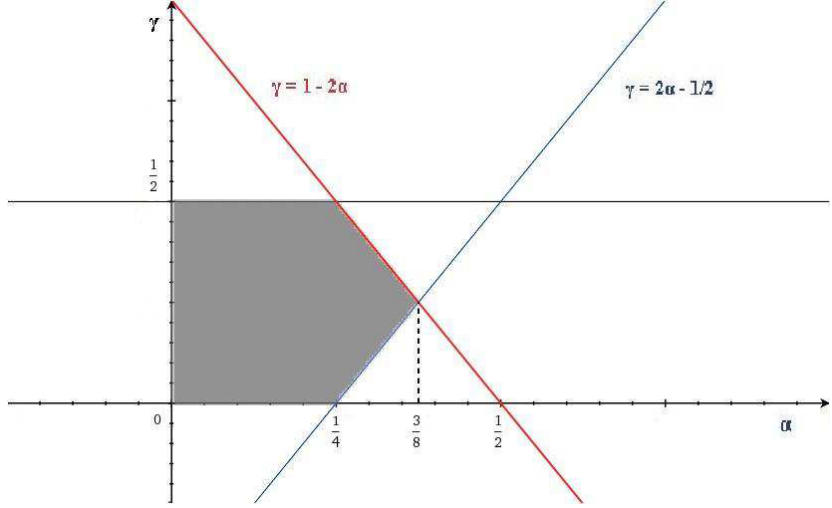}}
\end{center}
\caption{Permissible region for $(\alpha, \gamma)$}
\label{fig:region}
\end{figure}

\textbf{Remarks:} 
\begin{itemize}
\item From Figure~\ref{fig:region}, we note that $\gamma =0$ is permissible up to $\alpha<1/4$.  This leads to the case described in \cite{FS}.  The restriction $\ga>2\al -1/2$ comes from Lemma~\ref{le:20}.  It is easy to see from this graph where improvements can be made via the normal from method.

\item The restriction $\ga < 1-2\al$ results from Lemma~\ref{le:30}, and this is shown to be sharp in Section~\ref{sec:sharp}.  This leads to the restriction $\al <3/8$ instead of our original conjecture $\al<1/2$.
\end{itemize}

 \section{Proof of the bilinear and tri-linear estimates }

For the purposes of estimates, we   treat 
$\mathcal{N}(u,v) \sim \langle \nabla\rangle^{-\alpha}(\langle \nabla \rangle^{\alpha}u \langle \nabla \rangle^{\alpha} v)$. 

\subsection{Proof of Lemma \ref{le:20}} 
Let $\lambda_j = \tau_j - \ve_j \xi_j^2$ for $j=1,2$ where $\tau =\tau_1+ \tau_2$ and $\xi=\xi_1 + \xi_2$.  First we localize modulation $\tau -\ve \xi^2$ of functions $u$, $v$ by writing for example
$$
\widetilde{u} (\tau,\xi) = \sum_{k=0}^{\infty} \chi_{[2^k, 2^{k+1})} ( \langle \tau - \ve_1 \xi^2\rangle) \widetilde{u}(\tau,\xi).
$$
So in the following, we will assume that $\lambda_1 \sim L_1$, $\lambda_2 \sim L_2$ and $\tau - \xi^2 \sim L$ for some dyadic indices $L_1, L_2, L$.  At the end of the estimate, we will have the bound in terms of summable constants in all dyadic indices (e.g. $L_{\max}^{-\de/10}$ where $L_{\max} = \max(L, L_1, L_2)$).

We will show computations for the case $L_1 = L_{\max}$.  It will be clear that the other cases follow in a similar manner.  Applying the duality $(X^+_{s,b})^* = X^-_{s,b}$, we compute
\begin{align*}
\| N(u,v)\|_{X^+_{\ga, -\frac{1}{2}+\de}} &\sim \sup_{\|w\|_{X^-_{0,\frac{1}{2}-\de}}=1} \left| \int_{\rone\times \rone} \mathcal{N}(u,v) \, \langle \nabla \rangle^{\ga} w\, dx\, dt\right|\\
	&\sim \sup_{\|w\|_{X^-_{0,\frac{1}{2}-\de}}=1} \left| \int_{\tiny \begin{array}{l} \tau_1 + \tau_2 = \tau\\ \xi_1 + \xi_2 =\xi \end{array}} \frac{|\xi| \langle \xi_1\rangle^{\alpha} \langle \xi_2 \rangle^{\alpha}}{\langle \xi\rangle^{1+\alpha - \gamma}} \widetilde{u}(\tau_1, \xi_1) \widetilde{v}(\tau_2, \xi_2) \, \widetilde{w}(\tau,\xi)\, d\sigma \right|\\
	&\lesssim M_1 \sup_{\|w\|_{X^-_{0,\frac{1}{2}-\de}}=1} \left| \int_{\tiny \begin{array}{l} \tau_1 + \tau_2 = \tau\\ \xi_1 + \xi_2 =\xi \end{array}} \left[L_1^{\frac{1}{2}-\de} \langle \xi_1\rangle^{\gamma}\left|\widetilde{u}\right| \right]\, \left|\widetilde{v}\right|\, \, \left|\widetilde{w}\right|\, d\sigma \right|\\
	&\lesssim M_1 \sup_{\|w\|_{X^-_{0,\frac{1}{2}-\de}}=1} \|\langle\lambda_1\rangle^{\frac{1}{2}-\de} \langle \xi \rangle^{\gamma} \widetilde{u} \|_{L^2_{\tau} l^2_{\xi}} \left\|\mathcal{F}^{-1}_{\tau,\xi} \left|\widetilde{v}\right| \right\|_{L^4_{t,x}} \left\|\mathcal{F}^{-1}_{\tau,\xi} \left|\widetilde{w} \right| \right\|_{L^4_{t,x}}\\
	&\lesssim M_1 \|u\|_{X^{\gamma,\frac{1}{2}+\delta}} \|v\|_{X^{0,\frac{1}{2}+\delta}}
\end{align*}
where
\begin{equation}\label{m1}
M_1 \sim \sup_{\tiny\begin{array}{c}\xi_1+ \xi_2 = \xi\\ \xi\, \xi_1\, \xi_2 \neq 0\end{array}}  \frac{ \langle \xi_1\rangle^{\alpha-\gamma} \langle \xi_2 \rangle^{\alpha}}{\langle \xi\rangle^{\alpha - \gamma} L_{\max}^{\frac{1}{2}-\de}}.
\end{equation}

Note that we have used the embeddings $X^{\ve}_{0,1/2+}\subset X^{\ve}_{0,1/2-}\subset X^{\ve}_{0,3/8} \subset L^4_{t,x}$ to obtain the last inequality above.

It suffices to show that $M_1$ is bounded by summable constants in $L_{\max}$.  Let $N:= \max(\xi_1, \xi_2)$ and note that $\xi \leq 2N$ when $\xi_1 + \xi_2 = \xi$.  Also we note the following
$$
\lambda_1 + \lambda_2 = \tau - \xi^2 + \left[(\xi_1+ \xi_2)^2 -\ve_1 \xi_1^2 -\ve_2 \xi_2^2\right].
$$

Therefore, we must have $L_{\max} \gtrsim \left|(\xi_1+ \xi_2)^2 -\ve_1 \xi_1^2 -\ve_2 \xi_2^2\right|$.   

\textbf{Case 1.} When $\ve_1 = \ve_2 = -1$, we have $L_{max} \gtrsim N^2$. First if $\al \geq \ga$, then  $M_1 \lesssim N^{2\alpha -\ga} L_{\max}^{-\f{1}{2}+\de} \lesssim N^{2\al-\ga -1+4\de} L_{\max}^{-\de}$.  Therefore, we need to have $\ga > 2\al -1$ and appropriately small $\ga>0$.

Otherwise, if $\al < \ga$, then $M_1 \lesssim N^{\ga} L_{\max}^{\f{1}{2}-\de} \lesssim N^{\ga - 1+ 4\de} L_{\max}^{-\de}$, so $\ga <1$ would suffice.

\textbf{Case 2.} If $\ve_1 = \ve_2 = +1$, then we have $L_{\max} \gtrsim \xi_1 \xi_2$.  Then 
$$
M_1 \lesssim \frac{\langle \xi_1\rangle^{\alpha-\ga -\f{1}{2}+2\de}  \langle \xi_2\rangle^{\al -\f{1}{2}+2\de}}{\langle \xi\rangle^{\al-\ga} L_{\max}^{\de}}.
$$

If $\al\geq \ga$, then it suffices to require $\ga \geq 0$ and $\alpha <1/2$. 

If $\al < \ga$, then it suffices to require $\ga <1/2$ and $2\al -1 <\ga$.

\textbf{Case 3.} The remaining cases are either $\ve_1 = +1$, $\ve_2 = -1$ or $\ve_1 = -1$, $\ve_2 = +1$.  The first case gives $L_{\max} \gtrsim \xi \xi_2$ and the second gives $L_{\max} \gtrsim \xi \xi_1$.  So we have respectively
$$
M_1 \lesssim  \frac{ \langle \xi_1\rangle^{\alpha-\gamma} \langle \xi_2 \rangle^{\alpha-\f{1}{2}+2\de}}{\langle \xi\rangle^{\alpha - \gamma+\f{1}{2}-2\de} L_{\max}^{\de}} \qquad \textnormal{ or }\qquad
M_1 \lesssim  \frac{ \langle \xi_1\rangle^{\alpha-\gamma-\f{1}{2}+2\de} \langle \xi_2 \rangle^{\alpha}}{\langle \xi\rangle^{\alpha - \gamma+\f{1}{2}-2\de} L_{\max}^{\de}}.
$$
In both cases, if $\xi \sim N$, then it suffices to require $\gamma <1/2$.

If $\xi \ll N$, then both estimates give $M_1 \lesssim N^{2\al -\ga -\f{1}{2}+2\de} L_{\max}^{-\de}$.  Therefore we need to require $2\al -1/2 <\ga$.  We remark that this is the strongest bound which as appeared for this lemma.

Next, we prove Lemma \ref{le:25}. 
\subsection{Proof of Lemma \ref{le:25}} 
We will ignore the gain due to $\lambda^{1/2-}$ for this proof.
\begin{align*}
\|\mathcal{N} (u,v)\|_{L^2_T H^{\gamma}_x} &\sim \| \sum_{\xi_1 + \xi_2 = \xi} \frac{ |\xi|\langle \xi_1\rangle^{\alpha} \langle \xi_2 \rangle^{\alpha}}{\langle \xi\rangle^{1+\alpha - \gamma}}  \widehat{u}(\xi_1)  \widehat{v}(\xi_2)\|_{L^2_T l^2_{\xi}}\\
	&\hspace{-2.2cm}\sim \left\| \sum_{\xi_1 + \xi_2 = \xi} \frac{ |\xi|\langle \xi_1\rangle^{\alpha-1} \langle \xi_2 \rangle^{\alpha-\frac{1}{2}+\delta}}{\langle \xi\rangle^{1+\alpha - \gamma}}  [\langle \xi_1\rangle\widehat{u}(\xi_1) ] [\langle \xi_2\rangle^{\frac{1}{2}-\de}\widehat{v}(\xi_2)]\right\|_{L^2_T l^2_{\xi}}\\
	&\hspace{-2.2cm}\lesssim M_2 \left\|\left|\widehat{\langle \nabla \rangle u}\right| *_{\xi} \left|\widehat{\langle \nabla \rangle^{\frac{1}{2}-\de} v} \right| \right\|_{L^2_T l^2_{\xi}} \lesssim M_2\left\|\mathcal{F}^{-1}_{\xi}\left|\widehat{\langle \nabla \rangle u}\right|\right\|_{L^{\infty}_T L^2_{x}} \left\|\mathcal{F}^{-1}_{\xi} \left|\widehat{\langle \nabla \rangle^{\frac{1}{2}-\de} v}\right|\right\|_{L^{2}_T L^{\infty}_{x}}\\
	&\hspace{-2.2cm}\lesssim_{\de} M_2\|u\|_{L^{\infty}_t H^1_x} \|v\|_{L^2_T H^1_x} \lesssim_T M_2\|u\|_{L^{\infty}_t H^1_x} \|v\|_{L^{\infty}_T H^1_x(\mathbf{T})}
\end{align*}
where 
$$
M_2 \sim \sup_{\xi_1 + \xi_2 = \xi} \frac{ \langle \xi_1\rangle^{\alpha-1} \langle \xi_2 \rangle^{\alpha-\frac{1}{2}+\de}}{\langle \xi\rangle^{\alpha - \gamma}}.
$$

Note that we have used Sobolev embedding $H^{1/2+}_x(\mathbf{T}) \subset L^{\infty}_x (\mathbf{T})$ above.  To prove the desired estimate, we need to bound $M_2$ by an absolute constant.

If $\al \geq \ga$, then it suffices to have $\al <1/2$.

If $\al <\ga$, then it suffices to have $\ga <1/2$.

Lastly, we prove Lemma \ref{le:30}. 
\subsection{Proof of Lemma \ref{le:30}} 
In this proof, we will cover the cases when $\ve_1 =+1$ and $\ve_2 = -1$; or $\ve_1 = \ve_2 = +1$.  The remaining case $\ve_1 = \ve_2=-1$ is easier due to a faster decay in $\xi_1,\xi_2$, so it will not be argued here.
 
\textbf{Case 1.} First we consider the case where $\ve_1 =+1$, $\ve_2 = -1$.  Let $\lambda_j = \tau_j - \ve_j \xi_j^2$ for $j= 1,2,3,4$ where $\ve_4 =-\ve = -1$.   As in the proof of Lemma~\ref{le:20}, we localize modulations of each functions with respect to dyadic indices $L_1, L_2, L_3, L_4$ so that $\langle \tau_j -\ve_j \xi_j \rangle \sim L_j$ for $j=1,2,3,4$.  In the end, we will have an estimate in terms of a summable bound for $L_{\max}:= \max (L_1, L_2, L_3, L_4)$.

Let $\Gamma := \{ (\tau, \xi) \in \mathbf{R}^4 \times \mathbf{Z}^4: \tau_1 + \tau_2 + \tau_3 +\tau_4 =0, \, \xi_1 + \xi_2 + \xi_3 + \xi_4 =0\}$ and let $d\sigma$ be the inherited  measure on $\Gamma$.  Then
\begin{align*}
\|\mathcal{N} (T^{+,-} (u, v), w)\|_{X^+_{\gamma, -\frac{1}{2}+\de}} &\sim \sup_{\|z\|_{X^-_{0, \frac{1}{2}-\de}} =1} \left|\int_{\Gamma} a(\xi)\, \widetilde{u}(\tau_1, \xi_1)\, \widetilde{v}(\tau_2, \xi_2)\, \widetilde{w}(\tau_3, \xi_3) \,\widetilde{z}(\tau_4, \xi_4) \, d\sigma\right|
\end{align*}
where
$$
a(\xi) \sim \frac{\langle \xi_1\rangle^{\alpha}\langle \xi_3 \rangle^{\alpha}\langle \xi_4\rangle^{\gamma-\alpha}  }{\langle \xi_1 + \xi_2 \rangle \langle \xi_2 \rangle^{1-\alpha}} \qquad \qquad \textnormal{if } \xi_1 \xi_2 \xi_4 (\xi_1+ \xi_2)\neq 0
$$
and $a(\xi) = 0$ otherwise.   If $L_{\max} \sim L_1$ for instance, the integral above can be estimated as follows.

\begin{align*}
\int_{\Gamma} \left|a\, \widetilde{u}\, \widetilde{v}\, \widetilde{w} \,\widetilde{z}\right| \, d\sigma  &\lesssim \int_{\Gamma} \left|\frac{a}{L_1^{\frac{1}{2}+}}\, \lambda_1^{\frac{1}{2}+} \widetilde{u}\, \widetilde{v}\, \widetilde{w} \,\widetilde{z}\right| \, d\sigma \\
	&\lesssim \sup_{\xi} \left| \frac{a(\xi) \langle\xi_2\rangle^{\de} \langle \xi_3\rangle^{\de} \langle \xi_4\rangle^{\de}}{L_{\max}^{\frac{1}{2}-\de}}\right| \|L_1^{\frac{1}{2}+} u\|_{L^2_{t,x}} \| v_{\de} w_{\de} [L_4^{-2\de} z_{\de}] \|_{L^2_{t,x}}\\
	&\lesssim \sup_{\xi} \left| \frac{a(\xi)N^{3\de}}{L_{\max}^{\frac{1}{2}-\de}}\right| \|u\|_{X^{0,\frac{1}{2}+\de}} \|v_{\de}\|_{L^6_{t,x}} \|w_{\de}\|_{L^6_{t,x}} \|L_4^{-2\de} z_{\de}\|_{L^6_{t,x}} \\
	&\lesssim M_3 \|u\|_{X^{0,\frac{1}{2}+\de}}\|v\|_{X^{0,\frac{1}{2}+\de}}\|w\|_{X^{0,\frac{1}{2}+\de}}\|z\|_{X^{0,\frac{1}{2}-\de}}
\end{align*}
where Let $N := \max(|\xi_1|, |\xi_2|, |\xi_3|, |\xi_4|)$, $ u_{\de} := \mathcal{F}^{-1}_{\tau,\xi} \left[\langle \xi \rangle^{-\de} |\widetilde{u}|(\tau,\xi) \right]$ and 

$$
M_3:= \sup_{(\tau,\xi)\in \Gamma} \frac{\langle \xi_1\rangle^{\alpha}\langle \xi_3 \rangle^{\alpha}\langle \xi_4\rangle^{\gamma-\alpha} N^{3\de} }{\langle \xi_1 + \xi_2 \rangle \langle \xi_2 \rangle^{1-\alpha} L_{\max}^{\frac{1}{2}-\de}} 
$$  

Note that we have used $X^{0+,\frac{1}{2}+} \subset L^6_{t,x}$ for the last inequality.  Now it suffices to bound $M_3$ by a constant summable in $L_{\max}$.  First we observe the following two scenarios:
\begin{align}
\ve_3 = +1: \qquad &\sum_{j=1}^4 \lambda_j = -\xi_1^2 +\xi_2^2 -\xi_3^2 +\xi_4^2 = 2(\xi_1+\xi_2)(\xi_2 + \xi_3); \label{Lmax+}\\
\ve_3 = -1: \qquad &\sum_{j=1}^4 \lambda_j = -\xi_1^2 +\xi_2^2 +\xi_3^2 +\xi_4^2 = -2(\xi_2 \xi_3 + \xi_3 \xi_4 + \xi_4 \xi_2). \label{Lmax-}
\end{align}

  We split into the following cases for this estimate:

\textbf{Case 1A.} If $|\xi_1 + \xi_2| \gtrsim \max(|\xi_3|, |\xi_4|)$, then for $\al<1/2$ and $\ga <1/2$,
$$
M_3 \lesssim \frac{\langle \xi_1\rangle^{\alpha} N^{3\de}}{\langle \xi_1 + \xi_2 \rangle^{1-\ga} \langle \xi_2 \rangle^{1/2}L_{\max}^{\f{1}{2}-\de}}\lesssim \f{1}{L_{\max}^{\f{1}{2}-\de}}.
$$
So we are done.  Negation of Case 1A gives $|\xi_1 +\xi_2| \ll \max(|\xi_3|, |\xi_4|)$, which implies $\xi_3\sim \xi_4$ because of the relation $\xi_1+\xi_2+\xi_3+\xi_4 =0$.  The next case covers the possibility that $\xi_1, \xi_2$ may be large with opposite signs.

\textbf{Case 1B.}  Negation of Case 1A and also $\max(|\xi_1|, |\xi_2|) \sim N$.  Note that since $|\xi_1 + \xi_2| \ll N$, we must have $\xi_1 \sim \xi_2$.  Then
$$
M_3 \lesssim \f{ \langle \xi_3\rangle^{\ga} }{N^{1-2\al  -3\de} L_{\max}^{\f{1}{2}-\de}}.
$$

So we must have $\ga < 1- 2\al$.  This is where the upper bound in Lemma~\ref{le:30} for $\ga$ originates from.  We remark that this is completely necessary due to the cases such as
\begin{equation}\label{ce:1}
\xi_1 = N+1, \qquad \xi_2 = -N, \qquad \xi_3 = N,\qquad \xi_4 = -N-1.
\end{equation}

Note that if above holds, $L_{\max}$ does not have to be comparable $N$ in the case \eqref{Lmax+}, thus the bound for $M_3$ cannot be improved.  We have used \eqref{ce:1} to construct a counter-example for the cases $\ga >1-2\al$.   

By similar computations as above, the special case $\ga = 1-2\al$ can be shown to be true if $X^{0,1/2+} \subset L^6_{t,x}$ were  true.  However, this is an open conjecture of Bourgain (see \cite{Bour}) and it does not have a significant bearing on our conclusion, so we overlook this case.

\textbf{Case 1C.} Now the remaining case is when $\max(|\xi_1|, |\xi_2|) \ll N$.  Recall that $N\sim \xi_3 \sim \xi_4$.  This implies that $\xi_2 + \xi_3 \sim N$, so the case \eqref{Lmax+} gives that $L_{\max} \gtrsim N$.  The case \eqref{Lmax-} is even better since this gives $L_{\max} \sim N^2$.  So we take the lesser of these two bounds to estimate $M_3$ below.  Since $|\xi_1| \leq 2\max(|\xi_1 + \xi_2|, |\xi_2|)$, 
$$
M_3 \lesssim \f{\langle \xi_1\rangle^{\al}}{\langle \xi_1 + \xi_2\rangle \langle \xi_2 \rangle^{1-\al}} \f{N^{\ga+3\de}}{L_{\max}^{\f{1}{2}-\de}}
	\lesssim N^{\ga -\f{1}{2} +5\de} L_{\max}^{-\de}.
$$

So it suffices to require $\ga <1/2$.  This exhausts all cases for Case 1.\\

\textbf{Case 2.}  Now we consider the case where $\ve_1 = \ve_2 = +1$.  Following the same arguments as in the previous case, we have
$$
a(\xi) \sim \f{\langle \xi_3\rangle^{\al} \langle \xi_4\rangle^{\ga-\al}}{\langle \xi_1 \rangle^{1-\al} \langle \xi_2\rangle^{1-\al}} \qquad \qquad \textnormal{if } \xi_1 \xi_2 \xi_4 (\xi_1+ \xi_2)\neq 0
$$
and $a(\xi) = 0$ otherwise.  By the same series of estimates, it suffices to estimate $M_4$ by a constant summable in $L_{\max}$ where
$$
M_4:= \sup_{(\tau,\xi)\in \Gamma} \f{\langle \xi_3\rangle^{\al} \langle \xi_4\rangle^{\ga-\al} N^{3\de}}{\langle \xi_1 \rangle^{1-\al} \langle \xi_2\rangle^{1-\al} L_{\max}^{\f{1}{2}-\de}}.
$$

In this case, we have the following scenarios:
\begin{align}
\ve_3 = +1: \qquad &\sum_{j=1}^4 \lambda_j = -\xi_1^2 -\xi_2^2 -\xi_3^2 +\xi_4^2 = 2(\xi_1\xi_2 + \xi_3 [ \xi_1 + \xi_2]); \label{Lmax2+}\\
\ve_3 = -1: \qquad &\sum_{j=1}^4 \lambda_j = -\xi_1^2 -\xi_2^2 +\xi_3^2 +\xi_4^2 = 2(\xi_1 + \xi_3)(\xi_2 + \xi_3). \label{Lmax2-}
\end{align}

\textbf{Case 2A.} If $|\xi_1 \xi_2| \gtrsim N$, then $M_4 \lesssim N^{\ga + \al-1 +3\de}L_{\max}^{-1/2+\de}$.  So we are done since $\ga<1/2$ and $\al<1/2$.

\textbf{Case 2B.} The remaining cases must have $|\xi_1 \xi_2| \ll N$, which implies $\xi_3 \sim \xi_4 \sim N$.  Then the case \eqref{Lmax2+} gives $L_{\max} \gtrsim N$.  On the other hand, the case \eqref{Lmax2-} gives $L_{\max} \gtrsim N^2$.  We use the lesser of these two to estimate

$$
M_4 \lesssim \frac{N^{\ga+ 3\de}}{L_{\max}^{\f{1}{2}-\de}} \lesssim N^{\ga -\f{1}{2} + 5\de} L_{\max}^{-\de}.
$$

Since this is summable for $\ga <1/2$, we are done.

\subsection{Failure of Lemma~\ref{le:30} if $\ga >1-2\al$}\label{sec:sharp}

In this section, we construct an explicit counter-example to show that the following estimate fails
\begin{equation}\label{counterexample}
\| \mathcal{N}(T^{+,-}(u,v),w)\|_{X^+_{\ga, -\f{1}{2}+\de}} \leq C_{\de} \|u\|_{X^{+}_{0,\f{1}{2}+\de}} \|v\|_{X^{-}_{0,\f{1}{2}+\de}} \|w\|_{X^{+}_{0,\f{1}{2}+\de}}.
\end{equation}

Given $\eta \in \mathcal{S}_t(\mathbb{R})$ and $N\gg 1$, let $u$, $v$, $w$ be defined as follows:
$$
u(t,x) := \eta(t) e^{i(N+1)^2t + i(N+1)x}; \quad v(t,x) = \eta(t) e^{-iN^2t - iNx}; \quad w(t,x)=\eta(t) e^{iN^2t+ iNx}.
$$

First, we remark that the right side of \eqref{counterexample} is equal to $C \|\eta\|^3_{H_t^{1/2+\de}}$, where $C$ is independent of $N$.
Substituting these functions to \eqref{normal}, we obtain
$$
T^{+,-}(u,v)(t,x) = C_{\al} \eta^2(t) \f{\langle N+1\rangle^{\al} \langle N\rangle^{\al} e^{i(2N + 1)t +ix}}{N\left[ \langle N+1\rangle - \langle N\rangle\right] + \langle N\rangle -  \sqrt{2}}.
$$
Recall $\mathcal{N}(u,v) = |\nabla|\langle \nabla\rangle^{-1-\al} \left[ \langle \nabla\rangle^{\al} u \langle \nabla \rangle^{\al} v\right]$.  Then writing $\phi = \eta^3$, we have
$$
\mathcal{N}(T^{+,-}(u,v),w) = C_{\al} \phi (t) \f{|N+1| \langle N\rangle^{2\al}e^{i(N + 1)^2 t +  i(N+1)x}}{ \langle N+1 \rangle (N\left[ \langle N+1\rangle - \langle N\rangle\right] + \langle N\rangle -  \sqrt{2})} .
$$
 Then
 \begin{equation}\label{eq:integ}
 \| \mathcal{N}(T^{+,-}(u,v),w)\|_{X^+_{\ga, -\f{1}{2}+\de}} = C(N, \al, \ga) \left( \int_{\mathbb{R}}\f{|\widehat{\phi}|^2 (\tau - (N+1)^2)}{\langle \tau - (N+1)^2\rangle^{1-2\de}} \, d\tau \right)^{\f{1}{2}}
 \end{equation}
where 
$$
C(N, \al, \ga) := C \f{|N+1| \langle N\rangle^{2\al} }{ \langle N+1 \rangle^{1-\ga} (N\left[ \langle N+1\rangle - \langle N\rangle\right] + \langle N\rangle -  \sqrt{2})}.
$$

Note that the integral in \eqref{eq:integ} becomes independent of $N$ after a change of variable.  Also, for large $N$, $C(N,\al,\ga) \sim N^{2\al + \ga -1}$.  Since the right side of \eqref{counterexample} is independent of $N$, the trilinear estimate cannot hold if $2\al+\ga >1$.

\end{document}